\numberwithin{equation}{section}
\newtheorem{theorem}{Theorem}
\newtheorem{lemma}{Lemma}
\newtheorem{corollary}[lemma]{Corollary}
\newtheorem{definition}[lemma]{Definition}
\newtheorem{remark}[lemma]{Remark}
\newtheorem{proposition}[lemma]{Proposition}
\newtheorem{problem}[lemma]{Problem}
\numberwithin{lemma}{section}
\begin{document}
\title{Solution to the isoperimetric $n$-bubble problem on $\mathbb{R}^1$ with log-concave density}
\author{John Ross}

\maketitle

\begin{abstract}

\end{abstract}

\section{Introduction}

The isoperimetric problem is a classic problem in mathematics that dates back to antiquity. The original problem, as well as its (equivalent) dual problem, are as follows:

\begin{problem}[The Classic Isoperimetric Problem]
Out of all simple closed curves $\gamma \subset \mathbb{R}^2$ with a fixed length of perimeter, find the curve that maximizes its enclosed area.
\end{problem}


\begin{problem}[The Dual Isoperimetric Problem]
Out of all simple closed curves $\gamma \subset \mathbb{R}^2$ which enclose a fixed area, find the curve that minimizes its length of perimeter.
\end{problem}

That the circle is the solution to the isoperimetric problem has been assumed knowledge for centuries. A modern approach to this problem was first developed by Steiner in the 1830's \cite{Steiner1838}, \cite{Steiner2013}. For a brief history and overview of the classic problem, we refer to \cite{Blasjo05}. 

There are innumerable directions in which the isoperimetric problem has been generalized, and we introduce several below. Perhaps the most immediate generalization is to higher dimensional spaces, where we $\gamma^n \subset \mathbb{R}^{n+1}$ or $\gamma^n \subset \Omega^{n+1}$ (with $\Omega^{n+1} \subseteq \mathbb{R}^{n+1}$ a subset of Euclidean space, and with boundary $\partial \Omega$ that might intersect $\gamma$. Here, solutions take on the familiar name of ``bubbles'' because soap bubbles and soap film naturally model the isoperimetric problem in $\mathbb{R}^3$.

The next immediate direction in which to generalize the problem is in working with two (or more) regions. In this problem (which we formulate here for area), we can look to (unions of pieces of) hypersurfaces that enclose two fixed volume measurements $M_1$, $M_2$, and seek to minimize total surface area. A (modern classic) result in this direction is the solution to the double bubble conjecture. This conjecture was believed solved until more modern times, then reverted to conjecture, and then eventually solved by Hutchings, Morgan, Ritore, and Ros \cite{HutchingsMorganRitoreRos02}. 
Anecdotally, the problem becomes more challenging as more regions are introduced. The triple bubble problem has been proven only in the case of the plane by Wichiramala \cite{Wichiramala04}. A standard 3-bubble has been suggested for higher dimensions, although it has not been proven to be uniquely area minimizing.

Our third method of generalizing the problem is to introduce a density function on the background space. In this problem, we introduce a background density function $f$ on our ambient space ($\mathbb{R}^n$ or $\Omega$) that affects how ``enclosed volume'' and ``surface area'' are measured. Specifically, for an ambient space $\Omega^n \subseteq \mathbb{R}^n$, we introduce a density function $f: \Omega \rightarrow [0,\infty)$. Then, for a hypersurface $\gamma^{n-1}$ enclosing an $n$-dimensional solid $\Gamma$ (so $\partial \Gamma = \gamma$), we define the (density-weighted) volume and surface area to be
\begin{align*}
\text{Weighted Volume} &= \int_\Gamma f\, d\mathcal{H}^n\\
\text{Weighted Surface Area} &= \int_\gamma f\, d\mathcal{H}^{n-1}
\end{align*}
where the integrals are taken with respect to the usual Hausdorff measure.

This work will focus on multi-bubble isoperimetric problems with a density, and with the ambient space being the real number line $\mathbb{R}^1$. Such problems have been studied under a variety of densities. By an argument in \cite{MorganPratelli13}, it is known that a perimeter-minimizing $n$-bubble solution will exist on $\mathbb{R}^m$ when using a density function that radially increases to infinity. On $\mathbb{R}^1$, some early results for a single bubble under a large class of density functions were first identified in \cite{BayleCaneteMorganRosales06}. 


Two families of density functions that have been studied recently are the $|x|^p$ functions and the log-convex functions. An initial exploration on density functions of the form $|x|^p$ was done by a research team under the mentorship of Frank Morgan \cite{HuangMorgan19}. This team identified solutions for the single and double bubble on these densities. Work was expanded to the 3-bubble and the 4-bubble in the case of $p=1$ in \cite{RossSCOPE}. Of note in these results is that any isoperimetric solution maintains a regular structure, regardless of the relative size of the bubble regions.

A radically different family of density functions on $\mathbb{R}^1$ was studied by Bongiovanni et al. \cite{ChambersBongiovanniETAL18}, who looked at single and double-bubble solutions under log-convex density functions. This approach was motivated by Gregory Chamber's proof \cite{Chambers2019} of the single-bubble isoperimetric solution in general $\mathbb{R}^n$ with log-convex density. 
In \cite{ChambersBongiovanniETAL18}, it was found that a double-bubble solution under a log-convex density function did not have a standard structure. Instead, the solution could either be made up of two intervals (one per region) or three intervals (with one region sandwiched between two components of the other region), and the isoperimetric structure depended on the sizes of the two masses in questions. In recent work, \cite{Sothanaphan20} extended these results to identify possible triple-bubbles.

In both Huang et al. and Bongiovanni et al.'s work, radially symmetric, increasing density functions were studied. 
This paper looks at symmetric, radially increasing density functions $f$ that satisfy an additional property of log-concavity. This property stipulates that $\left[ \log f \right]'' \leq 0$. This study extends the work of \cite{HuangMorgan19} and \cite{RossSCOPE}, as $|x|^p$ is a log-concave function. Specifically, our work address the following problem:

\begin{problem}[Our Isoperimetric Problem]
Consider the real number line $\mathbb{R}^1$, imbued with a density function $f$ that satisfies the following:
\begin{itemize}
\item $f$ is radially symmetric ($f(x) = f(-x)$)
\item $f$ has a point of zero density at the origin ($f(0) = 0$)
\item $f$ is radially increasing ($f'(x) \geq 0$ if $x > 0$)
\item $f$ is log-concave ($\left[\log(f(x))\right]'' < 0$ for $x \neq 0$)
\end{itemize}
Then, given a set of $n$ masses $M_1 \leq \dots \leq M_n$, find a configuration of $n$ regions with these weighted masses and with minimal weighted perimeter.
\end{problem}

We will formally state the solution to this problem in Theorem \ref{TheoremNBubble} below. Unlike Bongiovanni et al.'s work with log-convex functions, we will find that a general structure is upheld for all $n$ and for all $n$-bubbles. Furthermore, our isoperimetric solution is unique up to reflective symmetry across the origin, and  our proof gives the solution for an arbitrary number of regions. We will call this $n$-bubble structure the ``standard position'' for $n$ masses, and formally define/introduce it in Section \ref{Section33}.





Of special note is that the density functions we study have a point of 0 density at the origin. This is not inherent in the definition of ``log-concave'', but it is necessary for our arguments. The point will serve as an ``anchor'' for endpoints of intervals, and will keep our entire configuration from ``sliding'' as we tweak the size of certain masses. More specifically, we will use a first-variation argument to show that moving interval endpoints at speed $1/f$ will naturally preserve enclosed area while moving perimeter. This first variation argument breaks down when $f = 0$, making the origin a natural interval endpoint. Our log-concave requirement will also play a role in our first variation analysis.

\subsection{Overview of the argument}



Our argument follows a proof by induction on the number of regions. The first several cases are already known in the case of $|x|^p$ (see \cite{HuangMorgan19} \cite{RossSCOPE}), and for a general log-concave density function the base case is given as an immediate offshoot of Corollary \ref{CorOneIntervalPerRegion}. For the inductive case, we assume that the standard position is the only isoperimetric solution for bubbles of $n$ or fewer regions. For contradiction, we assume there exist a specific set of $n+1$ masses for which an isoperimetric solution \emph{other} than standard position exists. We spend several sections drawing conclusions about what any isoperimetric configuration (standard or nonstandard) must look like. An important result here is Corollary \ref{CorOneIntervalPerRegion}, in which we use the first variation formula to show that an $n$-bubble must have exactly $n$ intervals. (Note that this immediately distinguishes our log-concave situation from the log-convex case studied in \cite{ChambersBongiovanniETAL18}.) Because of this, we can differentiate the standard and nonstandard solutions by identifying the ``outermost interval that breaks the standard pattern.'' We then use this interval as the foundation for our contradiction, swapping two intervals to get less total perimeter. In order to make the comparison work, we first carefully adjust the size of certain masses to develop a new set of masses with two isoperimetric solutions (one standard, one nonstandard). that can be easily compared, and from which the contradiction more readily appears. The mass-adjustments are described in Section 4, and a fundamental result that allows us to adjust these masses is Proposition \ref{PropInequality2}.

Our paper is organized as follows. Section 2 includes definitions and other preliminary work. Section 3 explores the first variation formulas for mass and perimeter, and then uses these tools to show any isoperimetric $n$-bubble must have exactly $n$ intervals. Section 4 explores how tweaking our region sizes (by inflating or deflating certain masses) will change our isoperimetric configurations. Finally, Section 5 shows our induction proof of the $n$-bubble problem.

\subsection{A quick remark on images in this paper}

Throughout this paper, we inlude images of the real number line with a density function. The images almost exclusively show the density function $f(x) = |x|$. However, this is simply done to standardize the images: the function shown should be viewed as a stand-in for any radially symmetric, log-concave density function with a point of 0 density at the origin. The exception is Figure \ref{Figure1Example}, which gives a basic example specifically for $f(x) = |x|$.

\section{Preliminaries}

In this section we include some basic definitions and early results.

\begin{definition}
A \textbf{density} function on $\mathbb{R}$ is simply a nonnegative function $f$. 
\end{definition}

\begin{definition} Given an interval $[a,b]$ and a density $f$, the \textbf{weighted mass} of the interval with respect to the density is $\int_a^b f$.  The \textbf{weighted perimeter} with respect to the density is defined to be $f(a) + f(b)$. Note that, in this paper, weighted mass is sometimes simply called mass, area, or volume, and that weighted perimeter is sometimes called perimeter.
\end{definition}

\begin{definition}
A \textbf{region} on $\mathbb{R}$ is a collection of disjoint intervals $[a_i, b_i]$ with  total mass $M$ and perimeter $P$
    \begin{align*}
        M &= \displaystyle\sum_i \int_{a_i}^{b_i} f\\
        P &= \displaystyle\sum_i f(a_i) + f(b_i)
    \end{align*}
\end{definition}
We remark that, under many natural density functions (in particular: density functions that have a positive lower bound), regions with finite mass or finite perimeter must necessarily be made up of only finitely many intervals. With a point of zero density, it is possible to have a region of finite mass consisting of infinitely many interval that are accumulating at the point of zero density.
\begin{definition}
A region $R$ with mass $M$ and perimeter $P$ is said to be \textbf{isoperimetric} (and is also referred to as a \textbf{bubble} or a \textbf{1-bubble}) if, out of all possible regions with mass $M$, $R$ has the least perimeter.
\end{definition}

%
%
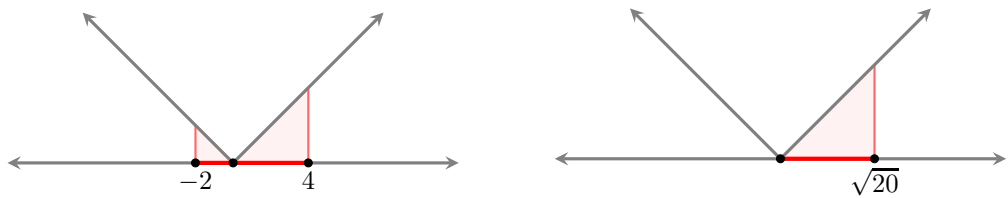
\begin{figure}[htbp]  
    \centering 
    \begin{subfigure}[h]{0.4\textwidth}
    \begin{tikzpicture} 
    \filldraw[color=red!60, fill=red!5, thick] (0,0) -- (-.5,0) -- (-.5,.5) -- cycle;
    \filldraw[color=red!60, fill=red!5, thick] (0,0) -- (1,0) -- (1,1) -- cycle;
    
    \draw[gray,stealth-,very thick] (-2,2) -- (0,0);
    \draw[gray,-stealth,very thick] (0,0) -- (2,2);
    \draw[gray,stealth-stealth,very thick] (-3,0) -- (3,0);
    
    \filldraw[red, ultra thick] (0,0) -- (1,0);
    \filldraw[red, ultra thick] (-.5,0) -- (0,0);

    \filldraw (0,0) circle (1.5pt);
    \filldraw (1,0) circle (1.5pt) node[anchor=north] {$4$};
    \filldraw (-.5,0) circle (1.5pt)node[anchor=north] {$-2$};
    
    \end{tikzpicture} 

    \end{subfigure}
    \hfill
    \begin{subfigure}[h]{0.4\textwidth}
    \begin{tikzpicture} 
    \filldraw[color=red!60, fill=red!5, thick] (0,0) -- (1.25,0) -- (1.25,1.25) -- cycle;
    
    \draw[gray,stealth-,very thick] (-2,2) -- (0,0);
    \draw[gray,-stealth,very thick] (0,0) -- (2,2);
    \draw[gray,stealth-stealth,very thick] (-3,0) -- (3,0);
    
    \filldraw[red, ultra thick] (0,0) -- (1.25,0);

    \filldraw (0,0) circle (1.5pt);
    \filldraw (1.25,0) circle (1.5pt) node[anchor=north] {$\sqrt{20}$};
    \end{tikzpicture}
    \end{subfigure}
        \caption{Above, we see an example with density function $f(x) = |x|$, and regions of weighted mass 10 arranged in two different configurations. The first configuration, defined on the interval $[-2,4]$, has weighted perimeter of 6. The second configuration, defined on the interval $[0,\sqrt{20}]$, has weighted perimeter of $\sqrt{20} \approx 4.47$. This second configuration is the isoperimetric 1-bubble, as shown in \cite{HuangMorgan19}. Note that in each picture, the ``region'' is an interval, while the weighted volume is calculated as (and displayed as) area under the density function.}
        \label{Figure1Example}
    \end{figure}

We can also consider multiple regions on the same number line. In such a situation, each region consists of (possibly multiple, disjoint) intervals. Two intervals, each from a different region, are either completely disjoint or meet at a single endpoint. In addition to measuring the mass and perimeter of each region, we can measure the \textbf{total (weighted) perimeter} by summing the perimeter of each of the regions. In doing so, any point where two intervals meet is only counted once. 

\begin{definition}
A configuration of n regions $R_1, \dots, R_n$ with masses $M_1, \dots, M_n$ is said to be \textbf{isoperimetric} (and is referred to as an \textbf{$n$-bubble}) if, out of all possible configurations of regions with the same masses, this configuration has the least total perimeter.
\end{definition}

To begin exploring isoperimetry  in earnest, we can start by looking at one side of the number line (say, without loss of generality, the positive side $x \geq 0$). Our first result says that we can benefit (resulting in lower perimeter) by consolidating our regions into adjacent intervals near the origin.

\begin{proposition}
On the positive real number line $(x \geq 0)$ with an increasing density function $f$, suppose we have a number of regions $R_1... R_n$ with masses $M_1, \dots , M_n$. Note that each region could consist of one or more intervals, and that there might be ``empty'' intervals that do not correspond to any particular region. Nevertheless, we can create a new configuration of regions which simultaneously preserve mass and decrease perimeter. In this new configuration, each region consists of a single interval, and there are no empty intervals.
\label{PropCondensed}
\end{proposition}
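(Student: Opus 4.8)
The plan is to pass to a ``mass coordinate'' in which the geometry trivializes, exhibit the natural consolidated configuration, and then verify directly that its perimeter cannot exceed the original's. Set $F(x)=\int_0^x f$, which is continuous and increasing, and substitute $m=F(x)$, so that a point at position $x$ sits at mass-coordinate $m=F(x)$ and contributes perimeter $f(x)=g(m)$, where $g:=f\circ F^{-1}$. Since $f$ is increasing and $F^{-1}$ is increasing, $g$ is nondecreasing and nonnegative. In this coordinate the mass of an interval is simply its length, and the total weighted perimeter of any configuration is $\sum_{p} g(p)$ summed over its distinct interval-endpoints $p$ (a point shared by two intervals being counted once). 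This is the analytic shadow of the ``move endpoints at speed $1/f$'' idea from the introduction, since $dx/dm = 1/f$, so translating in the mass-coordinate is exactly a mass-preserving motion.

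Next I would write down the candidate configuration. Relabel the masses in increasing order $M_{(1)}\le\cdots\le M_{(n)}$, put $S_0=0$ and $S_k=M_{(1)}+\cdots+M_{(k)}$, and let the new region $(k)$ occupy the mass-coordinates $[S_{k-1},S_k]$. This configuration has no empty intervals and assigns each region a single interval, and its perimeter is $g(S_0)+g(S_1)+\cdots+g(S_n)$, since consecutive regions share the interior points $S_1,\dots,S_{n-1}$ while $S_0=0$ and $S_n$ are the two outer endpoints.

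The heart of the argument is the comparison with the given configuration $O$. For each region $i$ of $O$, let $r_i$ be the mass-coordinate of its rightmost endpoint; a single point can be the right end of at most one interval, so the $r_i$ are $n$ distinct boundary points of $O$, to which I adjoin the global leftmost endpoint $p_0\ge 0$. The key inequality is that the sorted right-endpoints dominate the partial sums, $r_{(k)}\ge S_k$: the $k$ regions realizing the $k$ smallest values $r_{(1)},\dots,r_{(k)}$ have all of their mass contained in $[0,r_{(k)}]$, and since the mass-coordinate equals accumulated mass, the occupied mass there is at most $r_{(k)}$; hence $r_{(k)}$ is at least the total mass of those $k$ regions, which is at least $S_k$. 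Because $g$ is nondecreasing and nonnegative, discarding all other boundary points of $O$ and using $g(r_{(k)})\ge g(S_k)$ together with $g(p_0)\ge g(0)$ yields $P(O)\ge g(p_0)+\sum_{k=1}^n g(r_{(k)})\ge g(S_0)+\sum_{k=1}^n g(S_k)=P(\text{new})$, which is the claim.

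The step I expect to be the main obstacle is precisely this comparison, for two reasons. First, the consolidation is genuinely order-sensitive: packing the regions in an arbitrary order can produce a configuration worse than a cleverly fragmented original, since a small mass buried just far enough from the origin may be cheap; thus the argument must commit to the \emph{increasing} order and actually prove the majorization $r_{(k)}\ge S_k$, rather than appeal to a vague ``push everything toward the origin'' heuristic. Second, some bookkeeping is needed to handle regions made of several intervals and endpoints shared between regions, so as to be sure the $n+1$ selected points $p_0,r_{(1)},\dots,r_{(n)}$ are genuinely distinct boundary points contributing to $P(O)$; excluding degenerate zero-length intervals makes this routine, and also shows the inequality is strict whenever $O$ is not already in this consolidated form.
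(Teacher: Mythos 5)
Your proposal is correct and is essentially the paper's argument: consolidate the regions into adjacent intervals anchored at the origin and show each region's new right endpoint lies no further out than its old one, then invoke monotonicity of $f$. The mass-coordinate substitution and the explicit majorization $r_{(k)}\ge S_k$ are just a rigorous rendering of the step the paper asserts as ``clear'' (that $b_j'\le b_j$), with the only cosmetic difference being that the paper orders the consolidated intervals by original rightmost endpoint rather than by mass.
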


\begin{proof}
Suppose we have an arbitrary configuration on $n$ regions, each consisting of (possibly multiple) intervals on the non-negative axis $x \geq 0$. WLOG, we identify $R_1$ to have a maximum value (and rightmost endpoint) of $b_1$, $R_2$ to have a maximum value of $b_2$, and so on, named so that $b_1 < b_2 < \dots < b_n$. As seen in Figure \ref{Figure3Consolidate}, we can create a new configuration in which regions consist of a single interval, each of which are adjacent and ordered so that region $R_i$ sits to the left of region $R_j$ if $i < j$. Furthermore, this can clearly be done in such a manner that the mass of each region is preserved.

%
%
     \begin{figure}[H]
    \centering
    \begin{subfigure}[h]{0.4\textwidth}    
    \begin{tikzpicture}
    \draw[gray, -stealth,very thick](0,0) -- (3,3);
    \draw[gray, -stealth,very thick] (0,0) -- (4,0);

    \filldraw[green,ultra thick] (.9,0) -- (1.45,0);

    \filldraw[blue,ultra thick] (.3,0) -- (.55,0);
    \filldraw[blue,ultra thick] (3,0) -- (3.35,0);

    \filldraw[red,ultra thick] (2.15,0) -- (1.8,0);
    \filldraw[red,ultra thick] (2.45,0) -- (3,0);


    \filldraw (.9,0) circle (1pt);
    \filldraw (1.45,0) circle (1pt) node[anchor=north] {\tiny $b_1$};
    \filldraw (.3,0) circle (1pt) ;
    \filldraw (.55,0) circle (1pt);
    \filldraw (2.15,0) circle (1pt);
    \filldraw (1.8,0) circle (1pt);
    \filldraw (3,0) circle (1pt) node[anchor=north] {\tiny $b_2$};
    \filldraw (3.35,0) circle (1pt) node[anchor=north] {\tiny $b_3$};
    \filldraw (2.45,0) circle (1pt);

    \end{tikzpicture}
	\end{subfigure}
 \hfill
     \begin{subfigure}[h]{0.4\textwidth}    
    \begin{tikzpicture}
    \draw[gray, -stealth,very thick](0,0) -- (3,3);
    \draw[gray, -stealth,very thick] (0,0) -- (4,0);

    \filldraw[green,ultra thick] (0,0) -- (0.8,0);

    \filldraw[blue,ultra thick] (1.8,0) -- (2.5,0);

    \filldraw[red,ultra thick] (0.8,0) -- (1.8,0);


	\filldraw (0,0) circle (1pt);    
    \filldraw (0.8,0) circle (1pt) node[anchor=north] {\tiny $b_1'$};
    \filldraw (1.8,0) circle (1pt) node[anchor=north] {\tiny $b_2'$};
    \filldraw (2.5,0) circle (1pt) node[anchor=north] {\tiny $b_3'$};

    \end{tikzpicture}
    \end{subfigure}
    \caption{We see how an arbirtrary arrangement (on the left) can be consolidated (shown the right) in a manner that lowers perimeter.}
	\label{Figure3Consolidate}
\end{figure}
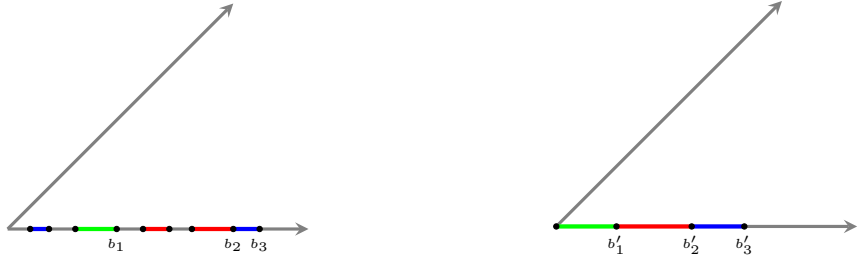

Because of how we have constructed these regions, it is clear that the new rightmost endpoint $b_j'$ of the region $R_j$ satisfies $0 < b_j' \leq b_j$. Since $f$ is assumed to be increasing, we know $f(b_j') \leq f(b_j)$. This gives us the following inequalities for the new and original perimeter:
$$
P_{new} = \displaystyle \sum_{j=1}^n f(b_j') \leq \displaystyle \sum_{j=1}^n f(b_j) \leq P_{orig} 
$$
completing the proof.
\end{proof}

\begin{corollary}
If a configuration of $n$ regions is isoperimetric, then the configuration consists of at most $2n$ adjacent intervals, with the origin contained in at least one of the intervals (either at an interior or an endpoint).
\end{corollary}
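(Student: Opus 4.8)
The plan is to obtain this structural statement as a direct consequence of Proposition \ref{PropCondensed}, applied separately to the two halves of the number line. First I would take an arbitrary isoperimetric configuration of $n$ regions $R_1, \dots, R_n$ and split every interval at the origin, so that each region is viewed as its positive part (its intersection with $[0,\infty)$) together with its negative part (its intersection with $(-\infty,0]$). On the positive half-line the density is increasing, so Proposition \ref{PropCondensed} applies verbatim: it produces a new arrangement in which each region contributes a single interval, the intervals are adjacent, packed outward from the origin with no empty gaps, and the total perimeter does not increase. By the radial symmetry $f(x)=f(-x)$, reflecting the negative half-line across the origin, applying the same proposition, and reflecting back does the identical job there. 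Carrying out both consolidations at once yields a configuration with at most one interval per region on each side, hence at most $2n$ intervals in all; since both sides are packed against the origin with no gaps, these intervals are mutually adjacent, and the origin is an endpoint of the innermost interval on each nonempty side (or an interior point, if a single interval straddles it). This already establishes the three claimed features --- at most $2n$ intervals, adjacency, and the origin being contained --- for the consolidated configuration.

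To upgrade this to a statement about the isoperimetric configuration \emph{itself}, I would argue by contradiction using strictness. Because $f$ is radially increasing with $f' \geq 0$ and log-concave, on each half-line $\log f$ is strictly concave where $f > 0$; combined with $f' \geq 0$ this forces $f'/f > 0$, so $f$ is in fact \emph{strictly} increasing away from the origin. Consequently the key inequality $f(b_j') \leq f(b_j)$ in the proof of Proposition \ref{PropCondensed} is strict whenever the consolidated endpoint $b_j'$ lies strictly inside the original endpoint $b_j$, which is exactly what happens as soon as a region is split into more than one interval on a given side or an empty gap is present. Thus any configuration not already in consolidated form would be strictly improved by the construction above, contradicting isoperimetry. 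Hence an isoperimetric configuration must coincide with its own consolidation, and therefore has the asserted form.

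The main thing to handle with care is the bookkeeping at the origin and for regions having mass on both sides. A region straddling the origin contributes to both half-line problems, and when its two consolidated pieces each abut the origin they should be recognized as a single interval through $0$; I would check that this only lowers the interval count and never breaks adjacency, so that the bound $2n$ and the ``origin is contained'' claim survive. I would also confirm that the shared endpoint where two regions meet (including at the origin) is counted once in the total perimeter, matching the convention fixed in the definitions, so that the perimeter comparison in Proposition \ref{PropCondensed} transfers cleanly to the total-perimeter setting. These are the only genuine subtleties; the counting and adjacency assertions themselves are immediate once the two one-sided consolidations are in place.
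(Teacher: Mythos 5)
Your proposal is correct and follows essentially the same route as the paper: split the configuration at the origin and apply Proposition \ref{PropCondensed} to each half-line (using the symmetry of $f$ for the negative side), yielding at most one interval per region per side and hence at most $2n$ adjacent intervals containing the origin. Your additional strictness argument (that $f'/f>0$ away from the origin forces a strict perimeter drop for any non-consolidated configuration, so an isoperimetric configuration must already be in this form) is a point the paper leaves implicit, and it is a sound refinement rather than a different approach.
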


\begin{proof}
Our previous proposition says that we can reconfigure our regions so that there are at most $n$ intervals on the positive side, and at most $n$ intervals on the negative side. (i.e., each region will have at most one interval on each side of the origin.) Furthermore, as the consolidation from before occurs at the origin, it is clear that the origin will either be between two different regions (in which case it's an endpoint of each), or in a single region (with some mass to be found on both the positive and negative side).
\end{proof}

Beyond simply knowing that our regions accumulate near the origin, we also learn that we can reorder them so that they ``grow'' from smallest to largest mass as we move further out from the origin. This follows immediately from iterating the transposition lemma, which we state below.

\begin{proposition}[Transposition Lemma]
Consider a configuration of regions $R_i$ ($i = 1,2$) such that the regions are individual intervals adjacent to each other on the positive $x$-axis. Suppose each $R_i$ has mass $M_i$, and suppose $M_1 < M_2$. Then weighted perimeter is minimized when the interval for $R_1$ lies to the left of the interval for $R_2$.
\end{proposition}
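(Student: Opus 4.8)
The plan is to reduce the entire comparison to the single point where the two intervals meet. Since the two regions are adjacent and lie on the positive axis, together they occupy one combined interval; I would write its outer endpoints as $p < r$ and its internal dividing point as $q$. Holding the outer endpoints $p$ and $r$ fixed (so that the two arrangements genuinely differ only by a transposition of the regions), the total mass $\int_p^r f = M_1 + M_2$ is the same in both, and the total weighted perimeter is $f(p) + f(q) + f(r)$, with the shared divider $q$ counted once. Thus $f(p)$ and $f(r)$ contribute identically to both arrangements, and the whole comparison collapses to comparing the value of $f$ at the divider.

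Next I would locate the divider in each arrangement. If $R_1$ sits on the left, its divider $q_A$ is determined by $\int_p^{q_A} f = M_1$; if instead $R_2$ sits on the left, its divider $q_B$ is determined by $\int_p^{q_B} f = M_2$. Because $f \geq 0$, the map $x \mapsto \int_p^x f$ is nondecreasing, so the strict inequality $M_1 < M_2$ forces $q_A < q_B$ (otherwise $q_A \geq q_B$ would give $\int_p^{q_A} f \geq \int_p^{q_B} f$, i.e. $M_1 \geq M_2$, a contradiction). In words, placing the smaller mass on the left pulls the divider closer to the origin.

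Finally I would invoke monotonicity of the density. Since $f$ is increasing on the positive axis and $q_A < q_B$, we have $f(q_A) \leq f(q_B)$, hence
\[
f(p) + f(q_A) + f(r) \;\leq\; f(p) + f(q_B) + f(r),
\]
so the arrangement with $R_1$ on the left has the smaller total perimeter, which is exactly the claim. Iterating this transposition across adjacent pairs then sorts the regions by increasing mass, as the surrounding discussion anticipates.

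The step I expect to require the most care is conceptual rather than computational: pinning down precisely what is held fixed in the comparison. The clean cancellation of the outer endpoints relies on the two arrangements sharing the same footprint $[p,r]$, which rests on the observation that equal total mass together with a common left endpoint forces a common right endpoint. I would state this explicitly to forestall the objection that one could also slide the whole block toward the origin; that is a separate optimization and is not what the transposition lemma asserts. Once the footprint is fixed, the argument is just the two short monotonicity steps above.
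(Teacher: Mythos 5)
Your proof is correct and follows the same route as the paper: fix the outer endpoints, observe that only the internal divider differs between the two arrangements, and note that placing the smaller mass on the left pulls that divider toward the origin where the increasing density is no larger. The paper's own proof is essentially this argument stated in one sentence with reference to a figure; you have simply made the mass-balance and monotonicity steps explicit.
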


\begin{proof}
This is seen immediately in Figure \ref{Figure4Transpose}. Keeping the leftmost and rightmost endpoints fixed, and transposing the two intervals if necessary, we see that the perimeter is lowered when the interval with less mass is placed to the left (as it moves the internal endpoint to the left).
\end{proof}

%
%

     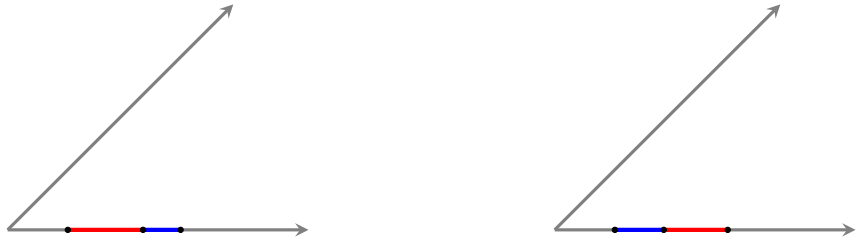
\begin{figure}[H]
    \centering 
    \begin{subfigure}[h]{0.4\textwidth}    
    
    \begin{tikzpicture}
    \draw[gray, -stealth,very thick](0,0) -- (3,3);
    \draw[gray, -stealth,very thick] (0,0) -- (4,0);

    \filldraw[blue,ultra thick] (1.8,0) -- (2.3,0);

    \filldraw[red,ultra thick] (0.8,0) -- (1.8,0);


    \filldraw (0.8,0) circle (1pt);
    \filldraw (1.8,0) circle (1pt);
    \filldraw (2.3,0) circle (1pt);

    \end{tikzpicture}
	\end{subfigure}
 \hfill
     \begin{subfigure}[h]{0.4\textwidth}    
    
    \begin{tikzpicture}
    \draw[gray, -stealth,very thick](0,0) -- (3,3);
    \draw[gray, -stealth,very thick] (0,0) -- (4,0);

    \filldraw[red,ultra thick] (1.45,0) -- (2.3,0);

    \filldraw[blue,ultra thick] (0.8,0) -- (1.45,0);


    \filldraw (0.8,0) circle (1pt);
    \filldraw (1.45,0) circle (1pt);
    \filldraw (2.3,0) circle (1pt);

    \end{tikzpicture}
	\end{subfigure}
 
    \caption{Transposition of adjacent intervals: If we transpose $R_1$ and $R_2$ without changing their global location, the lower perimeter results from placing the smaller interval closer to the origin (as the only endpoint to move is the internal endpoint). Iterating the transposition lemma guarantees that, on one side of the interval, regions will be ordered (according to weighted area) from smallest to largest as we move away from the origin.}
    \label{Figure4Transpose}
\end{figure}

\begin{definition}
A configuration of $n$ regions is said to be in a \textbf{condensed configuration} if it satisfies the following conditions:
	\begin{itemize}
	\item the configuration consists of at most $2n$ adjacent intervals, with each region contributing at most one interval on each side of the origin.
	\item the origin is contained in at least one of the intervals, possibly as an endpoint.
	\item The intervals are all adjacent, and -- working from the origin outward -- they increase in size.
	\end{itemize}
	\label{DefCondensed}
\end{definition}

\begin{remark}
Due to Proposition \ref{PropCondensed}, we know a necessary condition for a configuration to be isoperimetric is for it to be condensed.
\end{remark}

We conclude this section with an important comparison lemma that allows us to compare intervals of appropriate masses. Although it is stated for the positive real number line, it is clear (due to the symmetry of $f$) that there is a corresponding statement on the negative real number line.

\begin{proposition}
On the positive real number line with a symmetric, radially incresaing density $f$: Consider two intervals $R_1 = [a_1, b_1]$,  $R_2 = [a_2, b_2]$. Suppose that each of these intervals contains the same mass ($M_1 = M_2$), and suppose the innermost endpoint of the first region is closer to the origin than the second $(a_1 < a_2)$. Then the following two inequalities hold true: 
$$
  b_1 < b_2 \qquad \text{and} \qquad b_1 - a_1 > b_2 - a_2.
$$
\label{PropInequality1}
\end{proposition}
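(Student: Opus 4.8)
The plan is to translate the equal-mass hypothesis into a statement about the cumulative density $F(x) = \int_0^x f$ and then exploit that $f$ is strictly increasing on the positive axis. Since $f(x) > 0$ for $x > 0$ (log-concavity forces $f > 0$ wherever $\log f$ is defined, and $f$ is increasing), the function $F$ is continuous and strictly increasing on $[0,\infty)$. The equal-mass condition reads $F(b_i) - F(a_i) = M$ for $i = 1,2$, i.e. $b_i = F^{-1}\big(F(a_i) + M\big)$. Because $F$ (hence $F^{-1}$) is strictly increasing and $a_1 < a_2$, we get $F(a_1) + M < F(a_2) + M$ and therefore $b_1 < b_2$, which is the first asserted inequality.

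For the length inequality $b_1 - a_1 > b_2 - a_2$ I would interpolate between the two intervals by sliding the inner endpoint. For $t \in [a_1, a_2]$ define $\beta(t)$ to be the unique point with $\int_t^{\beta(t)} f = M$; this is well-defined because there is always at least mass $M$ to the right of $t$ (as $\int_t^\infty f \ge \int_{a_2}^{b_2} f = M$) and $F$ is strictly increasing. Then $\beta(a_1) = b_1$ and $\beta(a_2) = b_2$, and differentiating $F(\beta(t)) - F(t) = M$ yields $\beta'(t) = f(t)/f(\beta(t))$. Since $0 \le t < \beta(t)$ and $f$ is strictly increasing on the positive axis, we have $0 \le \beta'(t) < 1$, so the length $L(t) = \beta(t) - t$ satisfies $L'(t) = \beta'(t) - 1 < 0$. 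Hence $L$ is strictly decreasing and $b_1 - a_1 = L(a_1) > L(a_2) = b_2 - a_2$, as claimed. (Note that $\beta' \ge 0$ also re-proves $b_1 < b_2$.)

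The main obstacle is justifying the strictness that upgrades every ``$\le$'' to ``$<$'', and this is exactly where the hypotheses beyond plain monotonicity are needed. I must argue that $f$ is \emph{strictly} increasing on $(0,\infty)$: if $f$ were constant on some subinterval then $\log f$ would be affine there, contradicting the strict log-concavity $[\log f]'' < 0$; combined with $f' \ge 0$ this yields strict monotonicity and hence the strict inequality $f(t) < f(\beta(t))$ used above. The remaining technical points are routine: $f$ is $C^2$ away from the origin, so the implicit differentiation is valid at every interior $t$ where $\beta(t) > 0$, and $L$ is continuous on the closed interval $[a_1,a_2]$ and strictly decreasing on its interior, which suffices even when $a_1 = 0$.

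Alternatively, I could bypass differentiation with a translation/comparison argument that reuses only the strict monotonicity of $f$. Assuming for contradiction that $b_2 - a_2 \ge b_1 - a_1$, set $\ell = b_2 - a_2$ and slide $[a_2,b_2]$ leftward to $[a_1, a_1 + \ell]$; since $f$ is strictly increasing, a leftward shift strictly decreases mass, so $\int_{a_1}^{a_1+\ell} f < \int_{a_2}^{b_2} f = M$. On the other hand $\ell \ge b_1 - a_1$ gives $[a_1,b_1] \subseteq [a_1, a_1+\ell]$ and hence $\int_{a_1}^{a_1+\ell} f \ge \int_{a_1}^{b_1} f = M$, producing the contradiction $M < M$. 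I would present the deformation argument as the main proof and retain this comparison as a cross-check of the strict-monotonicity input.
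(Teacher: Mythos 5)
Your proposal is correct, and it is essentially a rigorous elaboration of what the paper disposes of in two informal sentences. The paper proves the first inequality by simply noting it is ``an immediate consequence of the two intervals enclosing the same mass,'' and the second by an average-density heuristic: $[a_2,b_2]$ sits where the density is larger, so to enclose the same mass it must be shorter. Your translation/comparison argument (slide $[a_2,b_2]$ left to start at $a_1$ and compare masses) is precisely that heuristic made airtight, while your main deformation argument --- defining $\beta(t)$ by $\int_t^{\beta(t)} f = M$ and computing $\beta'(t) = f(t)/f(\beta(t)) < 1$ --- is a genuinely different and arguably better-aligned route, since it anticipates the first-variation machinery of Section 3 and is structurally identical to the paper's own proof of Proposition \ref{PropInequality2} (where $Q'(t) = f'(b)/f(b) - f'(a)/f(a) \le 0$ plays the role of your $L'(t) = \beta'(t) - 1 < 0$). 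Your attention to strictness is also a genuine improvement: the proposition as stated only hypothesizes ``radially increasing,'' under which the strict inequalities could degenerate, and you correctly identify that the strict log-concavity from the paper's global setup is what rules out $f$ being locally constant and upgrades $\le$ to $<$. The only caveat worth noting is that you are importing hypotheses (strict log-concavity, hence strict monotonicity and positivity of $f$ on $(0,\infty)$) that are not in the proposition's own statement, so you should say explicitly that you are working under the standing assumptions of the paper's main problem; with that caveat the argument is complete.
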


\begin{proof}
The first inequality is an immediate consequence of the two intervals enclosing the same mass. For the final inequality, note that since  $[a_2, b_2]$ is further away from the origin. Since our density function is radially increasing as we move away from the origin, the  average density of $[a_2, b_2]$ will be larger than that of $[a_1, b_1]$, and since they each contain the same amount of mass, this means the interval $[a_2, b_2]$ will be smaller.
\end{proof}

\begin{corollary}
The first inequality above continues to hold true in the cases where $M_2 \geq M_1$. The second inequality also holds true in the cases where $M_2 \leq M_1$.
\end{corollary}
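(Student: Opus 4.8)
The plan is to reduce both statements to the equal-mass case already established in Proposition \ref{PropInequality1}, by comparing the interval in question against an auxiliary interval that shares the inner endpoint $a_2$ but carries mass $M_1$. I keep the standing hypothesis $a_1 < a_2$. Since we work on the positive axis, $a_1 \ge 0$, so $a_1 < a_2$ forces $a_2 > 0$ and hence $f > 0$ on $[a_2, \infty)$; the underlying fact I will use repeatedly is that the right endpoint of an interval based at a fixed point $a_2 > 0$ is a strictly increasing function of the enclosed mass.

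For the first inequality I would assume $M_2 \ge M_1$ and introduce the interval $[a_2, \tilde b]$ chosen to have mass exactly $M_1$. Applying Proposition \ref{PropInequality1} to the equal-mass pair $[a_1, b_1]$ and $[a_2, \tilde b]$, which now satisfy $a_1 < a_2$ with equal mass $M_1$, yields $b_1 < \tilde b$. Monotonicity of the endpoint in the mass then gives $\tilde b \le b_2$, since $[a_2, b_2]$ is based at the same point $a_2$ but carries the larger mass $M_2$. Chaining the two inequalities produces $b_1 < b_2$, as desired.

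For the second inequality I would instead assume $M_2 \le M_1$ and introduce $[a_2, \hat b]$ with mass $M_1$. The proposition applied to $[a_1, b_1]$ and $[a_2, \hat b]$ supplies the length comparison $b_1 - a_1 > \hat b - a_2$. Since $[a_2, b_2]$ is based at $a_2$ and carries the smaller mass $M_2 \le M_1$, the same monotonicity yields $b_2 \le \hat b$, whence $b_2 - a_2 \le \hat b - a_2 < b_1 - a_1$, which is the claimed inequality.

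I do not expect a genuine obstacle, since the substantive content is already contained in Proposition \ref{PropInequality1}. The only step needing care is the monotonicity of the right endpoint in the enclosed mass, and this is precisely where the hypothesis $a_2 > 0$ matters: it rules out the degenerate case $a_2 = 0$ (where $f$ vanishes at the origin) and guarantees that adding or removing mass strictly moves the right endpoint in the expected direction.
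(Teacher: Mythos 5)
Your argument is correct, and since the paper states this corollary without any proof, there is nothing to contrast it with; your reduction to the equal-mass case of Proposition \ref{PropInequality1} via an auxiliary interval $[a_2,\tilde b]$ of mass $M_1$, combined with monotonicity of the right endpoint in the enclosed mass (valid because $a_2>0$ and $f>0$ away from the origin), is evidently the intended one-line justification. Both chains of inequalities ($b_1<\tilde b\le b_2$ and $b_2-a_2\le \hat b-a_2<b_1-a_1$) preserve strictness, so the stated conclusions follow.
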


\section{The First Variation of Mass and Perimeter}

In this section, we briefly introduce the concept of continuously varying configurations and the first variation formula. We will introduce this for an arbitrary density function $f$ over the real number line.

\subsection{The First Variation Formulas}

Consider an interval of $[a,x]$. We have already seen how the weighted mass $M$ and perimeter $P$ are measured as $\int_a^x f$ and $f(a) + f(x)$, respectively. An immediate consequence of this is that

\begin{align}
    \frac{dM}{dx} &= f(x)\\
    \frac{dP}{dx} &= f'(x).
\end{align}
Suppose that we let $x = x(t)$ vary as a function of time, and with a specified velocity $x'(t)$. This will allow us to calculate 

\begin{align}
    \frac{dM}{dt} &= f(x(t)) x'(t)\\
    \frac{dP}{dt} &= f'(x(t)) x'(t).
\end{align}
If we choose our velocity to equal $1/f(x(t))$, these equations become

\begin{align}
    \frac{dM}{dt} &= 1\\
    \frac{dP}{dt} &= f'(x(t)) / f(x(t)) = \frac{d}{dx}\left[ \log(f(x(t))) \right].
\end{align}

%
%
\begin{figure}[H]
    \centering
    \begin{tikzpicture}
    \draw[gray, stealth-,very thick] (-3,3) -- (0,0);
    \draw[gray, -stealth,very thick](0,0) -- (3,3);
    \draw[gray, stealth-stealth,very thick] (-4,0) -- (4,0); 
    
    
    \filldraw[blue,ultra thick] (.7,0) -- (1.7,0);


    \draw[red, -stealth,very thick] (1.7,0) -- (2.5,0);

    \filldraw (.7,0) circle (2pt) node[anchor=north] {$a$};
    \filldraw (1.7,0) circle (2pt) node[anchor=north] {$x$};


    \end{tikzpicture}
    \caption{The interval $[a,x(t)]$ has one endpoint dragged to the right at speed $x'(t) = 1/f(x)$.}
    \label{fig:initPositions}
\end{figure}
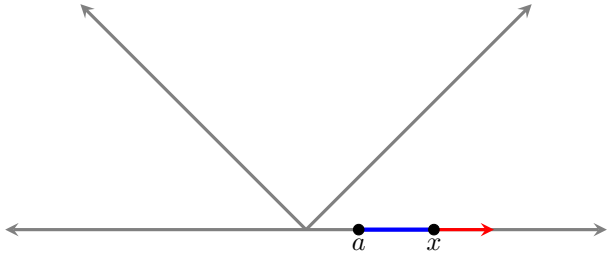

An immediate consequence of this is as follows:

\begin{proposition}[The first variation formula]
Suppose we take an interval $[a,b]$, and drag \emph{both} endpoints to the right at speed $1/f(x)$. Then the weighted mass of the interval will not change, while the perimeter's instantaneous rate of change will be $(\log(f))'(a) + (\log(f))'(b)$
\label{Prop1VF}
\end{proposition}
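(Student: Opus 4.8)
The plan is to combine the two endpoint motions already analyzed individually in the preceding display equations. The key observation is that dragging \emph{both} endpoints of $[a,b]$ simultaneously at speed $1/f$ is the superposition of two independent one-endpoint motions, so the total rates of change of mass and perimeter are simply the sums of the contributions from each endpoint. First I would set up the motion precisely: let $a = a(t)$ and $b = b(t)$ with $a'(t) = 1/f(a(t))$ and $b'(t) = 1/f(b(t))$, and write the mass as $M(t) = \int_{a(t)}^{b(t)} f$.

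Next I would differentiate the mass using the Leibniz rule for a variable-endpoint integral, obtaining
\begin{equation*}
\frac{dM}{dt} = f(b(t))\,b'(t) - f(a(t))\,a'(t).
\end{equation*}
Substituting $b'(t) = 1/f(b(t))$ and $a'(t) = 1/f(a(t))$, the two terms become $+1$ and $-1$ respectively, so $dM/dt = 0$, confirming that the weighted mass is preserved. The sign discrepancy (the \emph{left} endpoint $a$ contributes negatively to the enclosed mass as it moves right, while the \emph{right} endpoint $b$ contributes positively) is exactly what makes the mass constant rather than doubling, and this is the point I would take care to state clearly.

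For the perimeter, recall $P(t) = f(a(t)) + f(b(t))$. Differentiating gives
\begin{equation*}
\frac{dP}{dt} = f'(a(t))\,a'(t) + f'(b(t))\,b'(t) = \frac{f'(a(t))}{f(a(t))} + \frac{f'(b(t))}{f(b(t))},
\end{equation*}
and each summand is precisely $(\log f)'$ evaluated at the respective endpoint, yielding the claimed rate $(\log f)'(a) + (\log f)'(b)$. This uses the single-endpoint computation from the earlier display verbatim, applied once at $a$ and once at $b$. I would remark that, unlike the mass, both perimeter terms carry a $+$ sign because perimeter depends symmetrically on both endpoints rather than on their signed orientation.

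The computation here is entirely routine, so there is no serious obstacle; the only point requiring genuine care is the sign bookkeeping in the Leibniz differentiation of the mass integral, where the lower limit's contribution must be correctly negated. I would also note the tacit standing assumptions needed for the formula to be meaningful — namely that $f$ is differentiable and strictly positive at both $a$ and $b$ so that the velocity $1/f$ is well-defined and finite (this is precisely why the origin, where $f = 0$, is excluded as a moving endpoint, as flagged in the introduction).
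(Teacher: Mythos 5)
Your proposal is correct and matches the paper's intent exactly: the paper presents this proposition as an immediate consequence of the single-endpoint computations in the preceding displays, and your Leibniz-rule bookkeeping (the $-1$ from the left endpoint cancelling the $+1$ from the right for mass, and the two positive $(\log f)'$ terms for perimeter) is precisely the omitted calculation. No gap; your added remark about needing $f>0$ at both endpoints is also consistent with the paper's subsequent remark excluding the origin.
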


\begin{corollary}
Suppose we taken an isoperimetric $n$-bubble, comprised of a number of intervals with all endpoints of all intervals listed as $x_1, x_2, \dots, x_m$. Then 
$$
\sum_{i=1}^m \log(f)'(x_i) = 0.
$$
\end{corollary}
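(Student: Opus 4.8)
The plan is to apply Proposition \ref{Prop1VF} simultaneously to every interval in the configuration and then invoke the fact that an isoperimetric configuration is a critical point for perimeter under mass-preserving variations. First I would set up the variation: drag \emph{all} endpoints $x_1, \dots, x_m$ to the right at speed $1/f(x)$. By Proposition \ref{Prop1VF}, dragging both endpoints of a single interval at this speed preserves that interval's weighted mass while changing its perimeter at rate $(\log f)'(a) + (\log f)'(b)$. Since the intervals in a condensed configuration are adjacent (the right endpoint of one coincides with the left endpoint of the next) and this single variation moves \emph{every} endpoint at the common speed $1/f(x)$, every region's mass is individually preserved: each region's endpoints all move under the same rule, so the proposition applies region by region.

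Next I would compute the total rate of change of perimeter. Summing the per-interval contributions, the instantaneous rate of change of the \emph{total} weighted perimeter is $\sum_{i=1}^m (\log f)'(x_i)$, where the sum runs over all (distinct) endpoints, each counted once since shared endpoints between adjacent intervals contribute to the total perimeter only once. The key point is then that an isoperimetric $n$-bubble minimizes total perimeter among all configurations with the same masses. Because our variation is mass-preserving, the configuration remains admissible for all small $t$, and perimeter attains a minimum at $t=0$. Hence the first derivative of total perimeter must vanish at $t = 0$, giving $\sum_{i=1}^m (\log f)'(x_i) = 0$, which is exactly the claim.

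The main obstacle I anticipate is the boundary/regularity subtlety at the origin, where $f(0) = 0$ and the velocity $1/f$ blows up. If one of the endpoints sits at the origin, the prescribed variation is not well-defined there, so I would need to argue either that the origin is never a free endpoint available for this variation, or restrict the variation to endpoints away from the origin while accounting separately for the anchored endpoint. A clean way to handle this is to note that $(\log f)'$ may be singular at $0$, so I would state the corollary for configurations whose endpoints avoid the origin, or treat the origin-anchored endpoint as fixed (velocity zero) and verify that mass is still preserved and that the origin endpoint contributes nothing to a one-sided stationarity condition. A second, milder point to address is the one-sided nature of the minimum: since we may drag endpoints either left or right (reversing the sign of the velocity gives a valid mass-preserving variation in the opposite direction), the minimum is genuinely interior and the derivative vanishes rather than merely being nonnegative, which is what upgrades the inequality to the stated equality.
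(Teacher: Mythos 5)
Your proposal is correct and is essentially the paper's own argument: drag every endpoint to the right at speed $1/f$, note that all masses are preserved while perimeter changes at rate $\sum_i (\log f)'(x_i)$, and conclude this derivative vanishes at a minimizer. Your caveat about the origin (where $1/f$ blows up) is also exactly the point the paper raises in the remark immediately following this corollary, so nothing is missing.
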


\begin{proof} If our region is isoperimetric, it has minimal perimeter out of all other configurations that have regions with the same prescribed masses. This means, as we vary our configuration by dragging every endpoint to the right at speed $1/f$, our masses will stay the same and our perimeter must have been at a local/global minimum. Thus, its instantaneous rate of change should be 0.
\end{proof}

\begin{remark}
With our chosen density function $f(x)$, the first variation formula is not defined when there is an endpoint at $x=0$. However, as long as we are not dragging an endpoint across the origin, we can use the first variation formula as usual. Indeed, we will see that the point of zero density at the origin leads endpoints of regions to naturally ``slide'' towards the origin.
\end{remark}

\subsection{Consolidating regions using the first variation}

We have already seen that an $n$-bubble will consist of at most $2n$ intervals. We use the first variation formula in a number of ways to reduce the number of total intervals. We begin by identifying the different ways in which multiple intervals can be relatively positioned. 

\begin{definition}
Suppose we have a condensed configuration of $n$ regions, and suppose that there are  two regions (call them $A$ and $B$) each with two intervals, one on each side of the origin. Call these intervals $A^-$, $A^+$, $B^-$, and $B^+$  depending on their position relative to the origin. Then the regions $A$, $B$ are said to be  \textbf{alternating} if their relative positions, ordered left to right, are as $A^-$, $B^-$, $A^+$, $B^+$ or as $B^-$, $A^-$, $B^+$, $A^+$.

If $A$ and $B$ are not in an alternating pattern, this means that the relative positions are either as $A^-$, $B^-$, $B^+$, $A^+$ or as $B^-$, $A^-$, $A^+$, $B^+$. In this case, we say that $A$, $B$ are \textbf{nested}.
\end{definition}

Our next lemma will guarantee that alternating patterns cannot be isoperimetric: if such a configuration exists, we can use the first variation formula to create a continuous movement that preserves the masses of our $n$ regions while reducing total perimeter. The core idea present -- simultaneous siphoning of area from each of the inner intervals in the alternating pattern -- was used in \cite{ChambersBongiovanniETAL18} to reduce the number of possible intervals in a 2-bubble. 

\begin{proposition}
On the real number line with a radially increasing density function $f$: Suppose there exists a condensed configuration of $n$ regions, with two regions in an alternating pattern. Then we can create a new configuration the same $n$ masses and lower total perimeter that eliminates at least one of the inner intervals in the alternating pattern. 
\end{proposition}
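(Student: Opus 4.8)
The plan is to exhibit an explicit mass-preserving motion that simultaneously siphons area out of the two inner intervals of the alternating pattern, and then to verify via the first variation that this motion strictly lowers the total perimeter. First I would fix notation. Treating the case in which the left-to-right order is $A^-, B^-, A^+, B^+$ (the other case is identical after interchanging the names $A$ and $B$), I write the four intervals as $A^- = [x_0, x_1]$, $B^- = [x_1, x_2]$, $A^+ = [x_3, x_4]$, and $B^+ = [x_4, x_5]$, where $x_1 < 0$, $x_4 > 0$, and $x_2 \leq 0 \leq x_3$ (the gap between $x_2$ and $x_3$ around the origin may be occupied by intervals of other regions). The inner intervals are $B^-$ and $A^+$, and the only two endpoints I will move are the shared endpoints $x_1$ (between $A^-$ and $B^-$) and $x_4$ (between $A^+$ and $B^+$); every other endpoint in the configuration is held fixed.

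The motion is: drag $x_1$ to the right at speed $1/f(x_1)$ and simultaneously drag $x_4$ to the left at speed $1/f(x_4)$. Mass preservation follows directly from $dM/dx = f$: dragging $x_1$ rightward feeds $A^-$ at rate $f(x_1)\cdot(1/f(x_1)) = 1$ while draining $B^-$ at rate $1$, and dragging $x_4$ leftward drains $A^+$ at rate $1$ while feeding $B^+$ at rate $1$. Hence region $A$ gains and loses mass at the same unit rate (net zero), and likewise for $B$, so $M_A$ and $M_B$ are exactly preserved, while every other region is untouched. This is precisely the simultaneous siphoning: area is pulled out of the two inner intervals $B^-$ and $A^+$ and deposited into the outer intervals $A^-$ and $B^+$.

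Next I would compute the perimeter using the first variation formulas for a single moving endpoint (a single endpoint moving at speed $1/f$ changes perimeter at rate $(\log f)'$). Since only $x_1$ and $x_4$ move, and the origin contributes $f(0)=0$,
\[
\frac{dP}{dt} = (\log f)'(x_1) - (\log f)'(x_4).
\]
Because $f$ is symmetric and radially increasing, $(\log f)'(x_1) \leq 0$ for $x_1 < 0$ while $(\log f)'(x_4) \geq 0$ for $x_4 > 0$, so $dP/dt \leq 0$; under our standing density hypotheses (strict increase away from the origin, so $(\log f)'(x_4) > 0$) the inequality is strict, and the perimeter strictly decreases. I would then run the motion until the first of the two inner intervals shrinks to zero length, at which point it is eliminated, the masses are unchanged, and the total perimeter has strictly dropped.

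The main obstacle is the behavior at the origin in the subcase where the inner intervals actually abut it (that is, $x_2 = x_3 = 0$). There the moving endpoints approach a point of zero density, so the prescribed speed $1/f$ blows up, and I must confirm that the collapse occurs at a finite value of the deformation parameter and that the strict perimeter decrease survives in the limit. The clean fix is to reparametrize the motion by the remaining mass of the shrinking inner interval, which decreases at the constant unit rate; this removes the blow-up and shows the interval vanishes in finite ``mass-time,'' while the collapsing endpoint contributes $f \to f(0) = 0$ to the perimeter, so no perimeter is regained in the limit. Continuity of $P$ together with $dP/dt < 0$ along the motion then yields the strict decrease. The remaining bookkeeping — that no endpoint is ever dragged across the origin and that the condensed ordering of the other regions is left intact — is routine.
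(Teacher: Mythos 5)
Your strategy is the same as the paper's: siphon mass out of the two inner intervals of the alternating pattern by moving endpoints at speed $1/f$, use the first variation to check that all masses are preserved and that perimeter decreases because every moving endpoint moves toward the origin, and run the motion until an inner interval vanishes. Your handling of the degenerate case where an inner interval abuts the origin (reparametrizing by the mass of the shrinking interval so that the $1/f$ blow-up is harmless) is, if anything, more careful than the paper's one-line dismissal.

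There is, however, a genuine gap in your setup. You assume $A^-$ and $B^-$ share the endpoint $x_1$, that $A^+$ and $B^+$ share the endpoint $x_4$, and that every other endpoint in the configuration stays fixed. In a condensed configuration with $n>2$ regions this adjacency can fail: the definition of ``alternating'' constrains only the relative order of the four intervals belonging to $A$ and $B$, and a third region's interval may sit between $A^-$ and $B^-$ (e.g.\ a region $C$ with a single interval whose mass lies strictly between the masses of $B^-$ and $A^-$ must, in condensed form, be placed between them; such a $C$ cannot be re-chosen as one member of an adjacent alternating pair since it has only one interval). In that situation there is no shared endpoint $x_1$, and moving the right endpoint of $A^-$ while freezing $C$'s endpoints would change $C$'s mass, so your motion is neither well defined nor mass-preserving as written. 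The repair is exactly what the paper does: translate the entire block of endpoints from the right endpoint of $A^-$ through the left endpoint of $B^-$ to the right at speed $1/f$, and the block from the right endpoint of $A^+$ through the left endpoint of $B^+$ to the left at speed $1/f$. Each sandwiched region then has both endpoints moving at speed $1/f$ in the same direction, so its mass is preserved by the first variation formula, and since every moving endpoint still moves toward the origin, your perimeter computation goes through with additional nonpositive terms in the sum. With that modification your argument matches the paper's proof.
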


\begin{proof}
Suppose we have a condensed configuration of $n$ regions such that two regions (identified as $A$ and $B$) exist in an alternating pattern. 
WLOG, suppose points in these intervals satisfy $A^- \leq B^- \leq 0 \leq A^+ \leq B^+$.

Take the right endpoint of $A^-$, the left endpoint of $B^-$, and all interval endpoints between these two. We drag these endpoints to the right at speed $1/f(x)$. Simultaneously, we take the right endpoint of $A^+$, the left endpoint of $B^+$, and all interval endpoints between these two, and drag these endpoints to the left at speed $1/f(x)$. This process is illustrated in Figure \ref{Figure6Alternating}. According to Proposition \ref{Prop1VF}, this process does not change the weighted masses of any region for which both endpoints are moving. Additionally, the masses for $A$ and $B$ are not changing: the mass gained by $A^-$ (and $B^+$) is identical to the mass lost by $A^+$ (and $B^-$). Since $f$ is radially increasing and our endpoints are all moving towards the origin, this variation reduces total weighted perimeter. Since no moving endpoint is in danger of reaching the origin, this process can continue until either $B^-$ or $A^+$ has shrunk entirely to size 0. We see that perimeter was made to decrease throughout this process, and continued to decrease until one of the central intervals in the alternating pattern completely disappears.
\end{proof}

%
%

\begin{figure}[H]
    \centering
    \begin{subfigure}[h]{0.3\textwidth}    
    \begin{tikzpicture}[scale=0.4]
    \draw[gray, stealth-,thick](-3,3) -- (0,0);
    \draw[gray, -stealth,thick](0,0) -- (3,3);
    \draw[gray, stealth-stealth,thick](-4.5,0) -- (4.5,0); 
    \filldraw[green,ultra thick] (-.75,0) -- (-2,0);
    \filldraw[green,ultra thick] (3.1,0) -- (3.6,0);

	\node[anchor=north] at (-1.4,0) {\tiny $G$};
	\node[anchor=north] at (3.35,0) {\tiny $G$};
	
    \filldraw[blue, ultra thick] (-3,0) -- (-2, 0);
    \filldraw[blue,ultra thick] (0,0) -- (.95,0);
    
	\node[anchor=north] at (-2.5,0) {\tiny $B$};
	\node[anchor=north] at (0.5,0) {\tiny $B$};
    
    \filldraw[brown,ultra thick] (0,0) -- (-.75,0);
    \filldraw[brown,ultra thick] (.95,0) -- (1.65,0);
    \filldraw[yellow,ultra thick] (1.65,0) -- (3.1,0);
    
    \filldraw (-.75,0) circle (2pt);
    \filldraw (-2,0) circle (2pt);
    \filldraw (-3,0) circle (2pt);
    \filldraw (0,0) circle (2pt);
    \filldraw (0.95,0) circle (2pt);
    \filldraw (1.65,0) circle (2pt);
    \filldraw (3.1,0) circle (2pt);
    \filldraw (3.6,0) circle (2pt);

    \end{tikzpicture}

	\end{subfigure}
 	\hfill
    \begin{subfigure}[h]{0.3\textwidth}    
    \begin{tikzpicture}[scale=0.4]
    \draw[gray, stealth-,thick](-3,3) -- (0,0);
    \draw[gray, -stealth,thick](0,0) -- (3,3);
    \draw[gray, stealth-stealth,thick](-4.5,0) -- (4.5,0); 
    \filldraw[green,ultra thick] (-.75,0) -- (-2,0);
    \filldraw[green,ultra thick] (3.1,0) -- (3.6,0);
    \filldraw[blue, ultra thick] (-3,0) -- (-2, 0);
    \filldraw[blue,ultra thick] (0,0) -- (.95,0);
    \filldraw[brown,ultra thick] (0,0) -- (-.75,0);
    \filldraw[brown,ultra thick] (.95,0) -- (1.65,0);
    \filldraw[yellow,ultra thick] (1.65,0) -- (3.1,0);

	\node[anchor=north] at (-2.5,0) {\tiny $B$};
	\node[anchor=north] at (0.5,0) {\tiny $B$};

	\node[anchor=north] at (-1.4,0) {\tiny $G$};
	\node[anchor=north] at (3.35,0) {\tiny $G$};    
    
    \draw[red, -stealth,very thick] (-2,0) -- (-1.7,0);
    \draw[red, stealth-,very thick] (2.8,0) -- (3.1,0);
    \draw[red, stealth-,very thick] (1.35, 0) -- (1.65,0);
    \draw[red, stealth-,very thick] (0.65, 0) -- (0.95,0);
    
    \filldraw (-.75,0) circle (2pt);
    \filldraw (-2,0) circle (2pt);
    \filldraw (-3,0) circle (2pt);
    \filldraw (0,0) circle (2pt);
    \filldraw (0.95,0) circle (2pt);
    \filldraw (1.65,0) circle (2pt);
    \filldraw (3.1,0) circle (2pt);
    \filldraw (3.6,0) circle (2pt);

    \end{tikzpicture}
	\end{subfigure}
 	\hfill
    \begin{subfigure}[h]{0.3\textwidth}
    \begin{tikzpicture}[scale=0.4]
    \draw[gray, stealth-,thick](-3,3) -- (0,0);
    \draw[gray, -stealth,thick](0,0) -- (3,3);
    \draw[gray, stealth-stealth,thick](-4.5,0) -- (4.5,0); 
    \filldraw[green,ultra thick] (-.75,0) -- (-1.3,0);
    \filldraw[green,ultra thick] (2.0,0) -- (3.6,0);
    \filldraw[blue, ultra thick] (-3,0) -- (-1.3, 0);
    \filldraw[brown,ultra thick] (0,0) -- (-.75,0);
    \filldraw[brown,ultra thick] (0,0) -- (0.8,0);
    \filldraw[yellow,ultra thick] (0.8,0) -- (2.0,0);

	\node[anchor=north] at (-2.2,0) {\tiny $B$};

	\node[anchor=north] at (-1,0) {\tiny $G$};
	\node[anchor=north] at (2.8,0) {\tiny $G$};
    
    \filldraw (-.75,0) circle (2pt);
    \filldraw (-1.3,0) circle (2pt);
    \filldraw (-3,0) circle (2pt);
    \filldraw (0,0) circle (2pt);
    \filldraw (0.8,0) circle (2pt);
    \filldraw (2,0) circle (2pt);
    \filldraw (3.6,0) circle (2pt);

    \end{tikzpicture}
    \end{subfigure}
    \caption{In the first image, the blue region ``B'' and green region ``G'' consist of alternating intervals. In the second image, we see which endpoints we slide to reduce perimeter and maintain constant weighted volumes. In the final image, the blue region closest to the origin was fully moved to the other blue mass. At this point, the sliding is done and the alternating pattern has been eliminated.}
    \label{Figure6Alternating}
\end{figure}
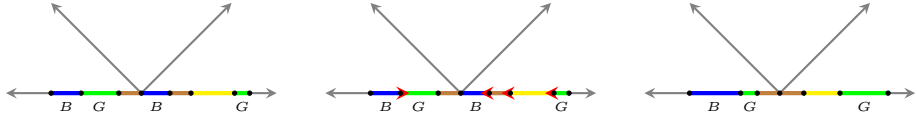

Note that the proposition above holds for all density functions that are radially increasing, because every perimeter point that is moving is moving towards the origin. Our next argument allows us to eliminate any 2-interval region, combining it into a single interval on one side of the origin, in the situation where our density function is log-concave.

\begin{proposition}
Suppose we have $n$ regions in a condensed form on the number line, and suppose that one region can be written as the union of two intervals, one non-positive and one non-negative. Then there exists a configuration $n$ regions with the same masses as before, but with lower perimeter than before, achieved by combining the region identified above into a single interval on one side of the origin.
\end{proposition}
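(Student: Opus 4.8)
The plan is to realize the desired combination as the endpoint of a mass-preserving continuous motion, and to show that in at least one of the two available directions the first variation of perimeter is strictly negative for the entire motion. Write the offending region as $A=[a,b]$ with $a<0<b$, so that its non-positive and non-negative pieces are $[a,0]$ and $[0,b]$; since $f(0)=0$, the origin lies in the interior of $A$ and is not itself a boundary point. List all genuine interval endpoints of the configuration as $x_1,\dots,x_m$; note that $A$ contributes the endpoints $a$ and $b$, while $0$ is not among the $x_i$. The motion I would use is the global push: drag every $x_i$ to the right at speed $1/f(x_i)$ (the leftward push being the mirror motion). Every interval other than the two halves of $A$ has both of its endpoints among the $x_i$ moving in lockstep, so by Proposition \ref{Prop1VF} its mass is unchanged; and $A=[a,b]$ likewise has both endpoints moving rightward at speed $1/f$, so $\int_a^b f$ is preserved as well. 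The effect on $A$ is exactly the transfer we want: its left endpoint $a$ climbs toward the origin while its right endpoint $b$ advances outward, so $A$'s mass migrates from the negative side to the positive side with no gaps ever opening, since all endpoints translate together.

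For perimeter, since the total weighted perimeter is $P=\sum_i f(x_i)$ and each $x_i$ moves at speed $1/f(x_i)$, the first variation is
\[
\frac{dP}{dt}=\sum_{i=1}^m f'(x_i)\cdot\frac{1}{f(x_i)}=\sum_{i=1}^m (\log f)'(x_i)=:S,
\]
while the leftward push gives $-S$ by the same computation. The crucial point is that I do not need $S$ to have a favorable sign at the initial instant; I need the sign to persist. Differentiating $S$ along the rightward push and using strict log-concavity yields $\frac{dS}{dt}=\sum_i (\log f)''(x_i)/f(x_i)<0$, since every $x_i\ne 0$ and $1/f>0$. Hence $S$ is strictly decreasing along the rightward motion and, by the mirror computation, strictly increasing along the leftward motion. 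I would then choose the direction by the sign of $S$ at the start: if $S\le 0$ push right, so that $\frac{dP}{dt}=S$ stays negative (strictly so for $t>0$, even in the borderline case $S=0$, because $S$ becomes negative immediately); if $S\ge 0$ push left, so that $\frac{dP}{dt}=-S$ stays negative by the symmetric argument. Either way the total perimeter strictly decreases throughout, and $A$ is driven onto a single side of the origin.

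Finally I would verify termination. On the source side the innermost endpoint (the one nearest the origin, namely $a$ in the rightward case) carries the smallest value of $f$, hence the largest speed $1/f$, and is driven toward the origin; because $f(0)=0$ this speed blows up and the endpoint reaches $0$ in finite time, before any collision (it separates from the next endpoint out, which is slower) and before any other interval can degenerate. At that instant the two halves of $A$ have merged: $A$ is a single interval abutting the origin on one side, all other masses are intact, and $P$ has strictly dropped, which is the assertion. The zero-density anchor at the origin is exactly what makes this the natural, well-defined stopping configuration and keeps the first-variation formula valid, since no endpoint is ever dragged across the origin. The main obstacle is precisely the persistence of the sign of $dP/dt$: a single first-variation computation controls only the instantaneous rate, and it is strict log-concavity, through $\frac{dS}{dt}<0$, that upgrades this to monotone decrease along the whole path and simultaneously disposes of the degenerate case $S=0$.
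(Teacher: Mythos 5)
Your variational engine is exactly the paper's: push a chosen set of endpoints at speed $1/f$, note that $\frac{d^2P}{dt^2}=\sum(\log f)''(x_i)/f(x_i)<0$ by log-concavity, and use concavity of $P$ to pick a direction (right if $P'\le 0$, left otherwise) in which perimeter decreases for all $t>0$. That part, together with your finite-time termination argument via $\int_a^0 f<\infty$, is correct. The gap is in the setup: you have silently specialized the hypothesis to the case where the offending region is a \emph{single} interval $A=[a,b]$ with $a<0<b$, i.e.\ the origin in its interior. That is actually the content of the corollary that follows this proposition in the paper. The proposition itself covers a region $R=[r_1,r_2]\cup[r_3,r_4]$ with $r_2\le 0\le r_3$ whose two pieces may be genuinely disjoint, separated by other regions' intervals occupying $[r_2,0]$ and $[0,r_3]$ (in a condensed configuration a region's negative and positive pieces sit at positions dictated by their individual masses, so there is no reason for either to abut the origin). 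For such a configuration your ``global push'' of \emph{every} endpoint at speed $1/f$ does not work: each of $[r_1,r_2]$ and $[r_3,r_4]$ then has both of its endpoints moving in lockstep, so by Proposition \ref{Prop1VF} each piece's mass is separately conserved, no mass ever transfers between them, and the motion never merges $R$ into one interval.

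The repair is the paper's choice of moving set: freeze $r_2$, $r_3$, and every endpoint between them, and drag only the endpoints less than $r_2$ together with those greater than $r_3$ (this set contains $r_1$ and $r_4$). Then $[r_1,r_2]$ loses mass at unit rate while $[r_3,r_4]$ gains at unit rate, all other regions are either fully moving or fully frozen and hence mass-preserving, and your concavity argument applies verbatim to the sum of $(\log f)'$ over the moving endpoints, terminating when one piece of $R$ has drained entirely into the other. You should also drop the claim that $0$ is never among the $x_i$; in the general condensed configuration the origin is an endpoint of two intervals, but it is a frozen endpoint under the paper's motion, so the first variation formula is still applied only where $f\ne 0$.
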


\begin{proof}
Suppose we have our condensed configuration of regions. Identify region $R = R^- \cup R^+ = [r_1, r_2] \cup [r_3, r_4]$ as split into intervals on the negative and positive side of the origin, respectively. 
Then we proceed as follows: first, take all endpoints less than $r_2$, as well as all endpoints greater than $r_3$. (Note that this set of endpoints includes $r_1$ and $r_4$.) Then, shift \emph{only these endpoints} to the right at speed $1/f$. According to the first variation formula for enclosed mass, this results in movement that keeps the weighted mass of each region constant (with $R$'s mass staying constant because $[r_1, r_2]$ is losing mass at the same rate $[r_3, r_4]$ is gaining mass). And by the first variation formula for perimeter, we know the total perimeter changes at rates of

\begin{align*}
\frac{dP}{dt} &= \sum f'(x(t)) / f(x(t))\\
\frac{d^2P}{dt^2} &= \sum \frac{f(x(t)) f''(x(t)) - \left[f'(x(t))\right]^2}{\left[f(x(t))\right]^3}
\end{align*}
where the summation is taken over all perimeter points that are moving. 
Notably, we can observe that the second derivative of perimeter is negative if $f$ is log-concave. So the perimeter function $P$, when dragging every perimeter point to the right, is concave down. There are therefore two possibilities:
	\begin{enumerate}
	\item If perimeter is decreasing (so $P' < 0$), it will continue to decrease. We can therefore continue in this manner until the endpoint $r_1$ has been dragged to collide with endpoint $r_2$.
	\item If perimeter is increasing, we instead drag every one of our targeted points to the \emph{left} with speed $1/f(x)$. This will keep the masses the same, will change the sign of $P'$, and will leave $P''$ negative. The result is that perimeter will decrease, and will continue to decrease until the endpoint $r_4$ collides with the endpoint $r_3$.
	\end{enumerate}

Thus, we see that we can drag endpoints in a manner that decreases perimeter until either $[r_1, r_2]$ or $[r_3, r_4]$ disappears. Either way, we are left with the entirety of region $R$ on one side of the origin.
\end{proof}

\begin{corollary}
If our density function $f$ is radially symmetric and log-concave: Suppose we have $n$ regions in a condensed form on the number line, and suppose the origin is contained in the interior of one region. Then our configuration is not isoperimetric, and we can lower perimeter by moving this region entirely to one side of the origin.
\end{corollary}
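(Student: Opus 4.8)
The plan is to treat this statement as a near-immediate corollary of the preceding proposition, which already establishes that any region expressible as the union of a non-positive interval and a non-negative interval can be consolidated onto one side of the origin while strictly decreasing total perimeter. All that really needs to be done is to verify that the hypothesis ``the origin lies in the interior of a region $R$'' forces $R$ into exactly the form that proposition requires, and then to read off the isoperimetric conclusion.

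First I would translate the topological hypothesis into an interval description. If the origin is interior to $R$, then $R$ contains an open neighborhood of $0$, so $R$ occupies points immediately to the left and immediately to the right of the origin. Since the configuration is condensed, $R$ contributes at most one interval on each side of the origin, so the part of $R$ surrounding $0$ must be a single interval $[r_1, r_2]$ with $r_1 < 0 < r_2$. I can then split it as $R = [r_1, 0] \cup [0, r_2]$, exhibiting $R$ as the union of a non-positive and a non-negative interval. This is precisely the hypothesis of the previous proposition, so I would simply invoke it: it yields a configuration on the same $n$ masses, with strictly smaller total perimeter, in which $R$ has been pushed entirely to one side of the origin. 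The existence of this lower-perimeter competitor with identical masses shows the original configuration is not isoperimetric, which is the claimed conclusion.

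The one point deserving care is the degenerate case of the earlier proposition, namely when $R$ is genuinely a single interval through the origin rather than two separated intervals; in the notation of that proposition this is the boundary case $r_2 = r_3 = 0$, where the two pieces meet exactly at $0$ and there is no intervening gap. I expect this bookkeeping check to be the main (and essentially only) obstacle, so I would confirm the first-variation siphon is unaffected. The artificial splitting point at the origin is never among the endpoints being dragged, since we move only endpoints strictly to the left and strictly to the right of $0$; because $f(0) = 0$ this virtual endpoint contributes nothing to perimeter. Dragging $r_1$ and $r_2$ (together with the endpoints of any outer regions lying beyond them) rightward at speed $1/f$ then keeps every mass fixed exactly as before, with $R$ losing mass on $[r_1,0]$ at rate $1$ and gaining it on $[0,r_2]$ at rate $1$, and the log-concavity second-variation estimate $f f'' - (f')^2 < 0$ is identical term by term. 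Hence the consolidation argument carries through verbatim, and the original configuration can be strictly improved.
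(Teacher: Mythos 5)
Your proposal matches the paper's proof: the paper likewise treats the region containing the origin as the union of two intervals meeting at the origin and reapplies the preceding consolidation proposition, noting that every endpoint except the origin itself is dragged in the variation. Your extra care about the degenerate splitting point at $0$ (where $f(0)=0$ so the virtual endpoint contributes nothing) is a sound elaboration of the same argument.
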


\begin{proof}
The same argument as above applies, with the region in question being interpreted as the union of two intervals, each directly adjacent to the origin. In this interpretation, every endpoint (except the origin) will be dragged to the right or the left in the variation.
\end{proof}

\begin{corollary}
If our density function $f$ is radially symmetric and log-concave: An isoperimetric configuration with $n$ regions consists of exactly $n$ intervals, with one interval per region. The origin will sit as an endpoint to two distinct regions.
\label{CorOneIntervalPerRegion}
\end{corollary}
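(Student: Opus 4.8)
The plan is to assemble the immediately preceding results into a short deduction, so that the only genuinely new content is the claim that the origin is shared by two distinct regions. First I would invoke the Remark that any isoperimetric configuration is condensed (Proposition \ref{PropCondensed}): we may assume at most $2n$ adjacent intervals, at most one interval per region on each side of the origin, intervals increasing in size as we move outward, and the origin contained in at least one interval.

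Next I would rule out every way a single region could occupy more than one interval. By the Corollary preceding this one, the origin cannot lie in the interior of any region, so no region is a single interval straddling $0$. Moreover, in a condensed configuration a region possessing two intervals must have exactly one on each side of the origin, i.e. it is a union of a non-positive and a non-negative interval, which is precisely the hypothesis of the previous two-interval Proposition; that proposition produces a strictly lower-perimeter configuration with the same masses, contradicting isoperimetry. Since every region carries positive mass it contributes at least one interval, so each of the $n$ regions contributes exactly one interval, giving exactly $n$ intervals in total.

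It remains to show the origin is an endpoint shared by two distinct regions (for $n \ge 2$). Here I cannot simply appeal to the first-variation Corollary, because the origin endpoint is pinned (the velocity $1/f(0)$ is undefined) and dragging only the non-origin endpoints fails to preserve the innermost masses. Instead I would argue by direct comparison. Suppose for contradiction that all $n$ intervals lie on one side, say $[0,a_1],[a_1,a_2],\dots,[a_{n-1},a_n]$ with $\int_0^{a_k} f = M_1 + \dots + M_k$, so that (using $f(0)=0$) the total perimeter equals $\sum_{i=1}^n f(a_i)$. Move the outermost (hence largest) region to the empty side, placing it as $[-c_n,0]$ with $\int_0^{c_n} f = M_n$; the remaining right-hand endpoints are unchanged, so the new perimeter is $\sum_{i=1}^{n-1} f(a_i) + f(c_n)$. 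Because $M_1 + \dots + M_n > M_n$ and $x \mapsto \int_0^x f$ is strictly increasing, we obtain $a_n > c_n$; and strict log-concavity together with radial monotonicity forces $f$ to be strictly increasing away from the origin, so $f(a_n) > f(c_n)$. Hence the competitor has strictly smaller perimeter, contradicting isoperimetry. Therefore intervals occur on both sides, the two innermost meet at the origin, and since each region is a single interval these two belong to distinct regions.

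The main obstacle I expect is this last step: the clean tool (the first-variation identity $\sum (\log f)'(x_i)=0$) is unavailable precisely because the origin has zero density and is pinned, so one must instead exhibit an explicit competitor and verify that the rearrangement genuinely lowers perimeter, where the strictness of the log-concavity hypothesis (and hence strict monotonicity of $f$) is what supplies the strict inequality. The earlier steps are essentially bookkeeping layered on top of the two preceding propositions. I would also note the $n=1$ caveat: the single-bubble optimum still has exactly one interval with an endpoint at the origin, but the ``two distinct regions'' clause is meaningful only for $n \ge 2$, exactly the regime in which $M_1 + \dots + M_n > M_n$ makes the comparison strict.
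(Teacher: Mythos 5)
Your proposal is correct, and it is in fact more complete than what the paper offers: the paper states Corollary \ref{CorOneIntervalPerRegion} with no proof at all, treating it as an immediate consequence of the preceding proposition (eliminating a region split across the origin) and the corollary ruling out the origin in a region's interior. Your first two steps reproduce exactly that implicit deduction. Where you genuinely add something is the final clause, that the origin is an endpoint of \emph{two distinct} regions: the preceding results only guarantee that the origin is an endpoint of at least one interval, and they do not by themselves exclude the degenerate possibility that all $n$ intervals stack on a single side. You correctly observe that the first-variation identity $\sum (\log f)'(x_i)=0$ cannot close this gap (the origin endpoint has $f=0$ and cannot be dragged), and your explicit competitor --- reflecting the outermost region to the empty side, so the perimeter drops by $f(a_n)-f(c_n)$ with $a_n>c_n$ because $\int_0^{a_n} f = \sum_i M_i > M_n = \int_0^{c_n} f$ --- is a clean and valid fix, with the strict monotonicity of $f$ on $(0,\infty)$ following from strict log-concavity plus $f'\ge 0$ exactly as you say. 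Your $n=1$ caveat is also apt. In short: same skeleton as the paper for the interval count, plus a correct direct-comparison argument for a claim the paper asserts without justification.
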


We conclude this section with an important inequality relating perimeter points to intervals that contain the same mass. The result, due to the log-concavity of the density function, should be viewed as similar to those of Proposition \ref{PropInequality1}.

\begin{proposition}

On the positive real number line with a log-concave density function $f$: Consider two intervals $R_1 = [a_1, b_1]$,  $R_2 = [a_2, b_2]$. Suppose that each of these intervals contains the same mass ($M_1 = M_2$), and suppose the innermost endpoint of the first region is closer to the origin than the second $(a_1 < a_2)$. Then $$f(b_1) - f(a_1) \geq f(b_2) - f(a_2).$$
\label{PropInequality2}
\end{proposition}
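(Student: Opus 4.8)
The plan is to connect the two intervals by the mass-preserving motion of Proposition \ref{Prop1VF} and track how the quantity $f(b) - f(a)$ evolves along it. Starting from $[a_1, b_1]$, I would drag both endpoints to the right at speed $1/f(x)$. By Proposition \ref{Prop1VF} the weighted mass stays constant at $M$, so at each instant the interval is the unique equal-mass interval determined by its current left position; in particular, since the inner endpoint increases monotonically from $a_1$ (at speed $1/f > 0$), at the moment it reaches $a_2$ the interval is exactly $[a_2, b_2]$. It therefore suffices to show that $f(b) - f(a)$ is non-increasing along this motion.

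The computation is then immediate from the chain rule. Writing $a = a(t)$, $b = b(t)$ with $a'(t) = 1/f(a)$ and $b'(t) = 1/f(b)$,
\begin{align*}
\frac{d}{dt}\bigl[ f(b) - f(a) \bigr] &= f'(b)\,b'(t) - f'(a)\,a'(t) = \frac{f'(b)}{f(b)} - \frac{f'(a)}{f(a)}\\
&= (\log f)'(b) - (\log f)'(a).
\end{align*}
Here the log-concavity hypothesis does all the work: $(\log f)'' < 0$ makes $(\log f)'$ strictly decreasing on the positive axis, and since $b > a > 0$ throughout the motion (the moving interval always encloses the fixed positive mass $M$), we get $(\log f)'(b) - (\log f)'(a) \leq 0$. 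Hence $f(b) - f(a)$ decreases as the inner endpoint moves from $a_1$ out to $a_2$, yielding $f(b_1) - f(a_1) \geq f(b_2) - f(a_2)$ as claimed.

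The argument is short, and I expect the only real care to be needed at the boundary: the well-definedness of the motion and the degenerate case $a_1 = 0$. For the former, one needs $f$ positive and differentiable on the open positive axis (guaranteed by the standing hypotheses) so that the speed $1/f$ and the equal-mass endpoint $b = b(a)$ are well-defined and the inner endpoint genuinely reaches $a_2$. For the case $a_1 = 0$, where $f(a_1) = 0$ and the speed $1/f$ blows up, I would run the monotonicity argument on configurations whose inner endpoint lies in $[\varepsilon, a_2]$ and let $\varepsilon \to 0^+$, using continuity of $f$ to recover $f(b_1) - f(a_1) = f(b_1)$; the inequality then passes to the limit. This degenerate endpoint is the main, if minor, obstacle.
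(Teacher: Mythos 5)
Your argument is exactly the paper's proof: drag both endpoints rightward at speed $1/f$ to preserve mass, differentiate $Q(t) = f(b(t)) - f(a(t))$ to get $(\log f)'(b) - (\log f)'(a) \leq 0$, and conclude by monotonicity. Your additional handling of the degenerate case $a_1 = 0$ via a limiting argument is a sensible refinement the paper omits, but the approach is the same.
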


\begin{proof}
Let $Q(t) = f(b(t)) - f(a(t))$ with $a(0) = a_1$ and $b(0) = b_1$, and consider what happens to $Q(t)$ as we drag points $a,b$ to the right at speed $1/f$. The first variation formula tells us 
$$
Q'(t) = \frac{f'(b(t))}{f(b(t))} - \frac{f'(a(t))}{f(a(t))}.
$$
Since $f$ is log-concave, we know the quantity $f'/f$ is \emph{decreasing}, meaning that (as long as $a < b$) we will have $Q'(t) \leq 0$.  Therefore, $Q(t)$ is a monotone decreasing quantity, and will be smaller by the time it gets to points $a_2$, $b_2$. This completes the proof.

\end{proof}

\subsection{A standard $n$-bubble configuration \label{Section33}}

We are now in a position to identify our ``standard'' $n$-bubble configuration. Corollary \ref{CorOneIntervalPerRegion} has shown that every $n$-bubble must have exactly $n$ intervals positioned along the real number line; that the intervals must be arranged in a condensed manner (in the spirit of Definition \ref{DefCondensed}); and that the origin will appear as an endpoint for two distinct intervals/regions. The final question is the appropriate order for the intervals, i.e. on which side of the origin they will appear.

\begin{definition}
Suppose we have a set of $n$ fixed masses $\{ M_1, \dots, M_n \}$ that satisfy  $ 0 < M_1 \leq M_2 \leq \dots \leq M_n$. Then the \textbf{standard configuration} of these masses will be an arrangement of $n$ regions (with $R_i$ containing mass $M_i$), such that
	\begin{itemize}
	\item The regions are condensed (in the spirit of Definition \ref{DefCondensed}) and appear as single intervals (in the spirit of Corollary \ref{CorOneIntervalPerRegion}).  
	\item The region $R_i$ is found on the positive side or the negative side of the origin depending solely on the parity of its index, so odd-indexed regions appear the opposite side of the even-indexed regions. 
	\end{itemize}
\end{definition}

Because of the nature of condensed regions (following Definition \ref{DefCondensed} and the transposition lemma), we know that smaller regions will appear closer to the origin. This, along with the even/odd index split across the origin, completely determines the positions of the intervals. Figure \ref{Fig2StandardPosition} shows a standard 5-bubble.

%
%
\begin{figure}[H]
    \centering
    \begin{tikzpicture}
    \draw[gray, stealth-,very thick] (-3,3) -- (0,0);
    \draw[gray, -stealth,very thick](0,0) -- (3,3);
    \draw[gray, stealth-stealth,very thick] (-4,0) -- (4,0); 
    
    \filldraw[brown,ultra thick] (0,0) -- (.8,0);
    
    \filldraw[blue,ultra thick] (0,0) -- (-1.1,0);

    \filldraw[pink,ultra thick] (.8,0) -- (1.7,0);

    \filldraw[yellow,ultra thick] (1.7,0) -- (3.3,0);

    \filldraw[red,ultra thick] (-1.1,0) -- (-2.4,0);



    \filldraw (0,0) circle (2pt);
    \filldraw (.8,0) circle (2pt);
    \filldraw (1.7,0) circle (2pt);
    \filldraw (-1.1,0) circle (2pt);
    \filldraw (3.3,0) circle (2pt);
    \filldraw (-2.4,0) circle (2pt);

	\node[anchor=north] at (0.5,0) {\tiny $R_1$};
	\node[anchor=north] at (-0.5,0) {\tiny $R_2$};
	\node[anchor=north] at (1.25,0) {\tiny $R_3$};
	\node[anchor=north] at (-1.8,0) {\tiny $R_4$};
	\node[anchor=north] at (2.45,0) {\tiny $R_5$};


    \end{tikzpicture}
    \caption{A standard $5$-bubble, consisting of regions $R_i$ of weighted mass $M_i$ satisfying $M_1 \leq M_2 \leq M_3 \leq M_4 \leq M_5$. Note that the masses are weighted by the function $f(x) = |x|$, so (e.g.) even though the interval associated with $R_2$ appears larger than the interval associated with $R_3$, the weighted masses satisfy $M_3 > M_2$.}
    \label{Fig2StandardPosition}
\end{figure}
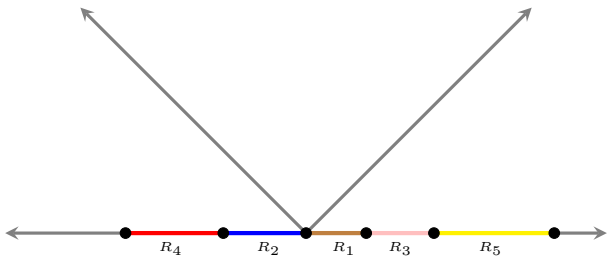

\begin{theorem}
For any $n \in \mathbb{N}$, and any collection of $n$ masses, the standard configuration is the only isoperimetric $n$-bubble with these masses.
\label{TheoremNBubble}
\end{theorem}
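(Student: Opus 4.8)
The plan is to induct on the number of regions $n$, taking the single-bubble case (which follows from Corollary \ref{CorOneIntervalPerRegion} together with the consolidation results of Section 2) as the base. Throughout, the crucial structural simplification is Corollary \ref{CorOneIntervalPerRegion}: any isoperimetric $(n+1)$-bubble consists of exactly $n+1$ intervals, one per region, sharing the origin as a common endpoint between the two innermost intervals. Combined with the transposition lemma and Definition \ref{DefCondensed}, this forces the intervals on each side of the origin to be ordered by increasing mass as one moves outward. Consequently an isoperimetric configuration is \emph{completely} determined by the partition of the masses $\{M_1, \dots, M_{n+1}\}$ into a ``positive-side'' set and a ``negative-side'' set, and the standard configuration is exactly the partition that alternates the masses by rank across the origin. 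The theorem therefore reduces to the claim that, among all such partitions, the alternating one uniquely minimizes the total weighted perimeter $\sum_i f(x_i)$ summed over the (nonzero) interval endpoints.

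For the inductive step I would argue by contradiction: suppose some collection of $n+1$ masses admits an isoperimetric configuration $C$ that is not standard. Reading the intervals from the origin outward on each side, I locate the \emph{outermost} interval whose side-assignment disagrees with the standard alternating pattern; this ``first defect,'' viewed from the outside in, is the engine of the contradiction. The goal is to exhibit a competing configuration with the same masses but strictly smaller perimeter, obtained by transposing the defect interval with its counterpart on the opposite side. The quantitative input making such a swap favorable is Proposition \ref{PropInequality2}: for two intervals of equal mass, the one lying closer to the origin contributes a strictly larger difference $f(b) - f(a)$ of endpoint values. Equivalently, writing the endpoint's density value as a function $h(m) = f(F^{-1}(m))$ of the cumulative enclosed mass, where $F(x) = \int_0^x f$, log-concavity of $f$ makes $h$ concave; it is exactly this concavity that makes a more balanced (alternating) distribution of cumulative masses cheaper. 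The inductive hypothesis enters by certifying that, away from the defect, the interior intervals already form a standard sub-configuration and hence cannot be improved in place.

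The obstacle — and the reason the argument is delicate — is that moving a single interval from one side to the other shifts every cumulative partial sum outside it, so a naive swap alters many endpoints at once and the perimeter bookkeeping does not close. To circumvent this I would, as a preliminary step (Section 4), \emph{adjust the masses}: by inflating or deflating selected regions I produce a nearby collection of masses for which a standard configuration and a nonstandard configuration are \emph{simultaneously} isoperimetric and agree on every interval except the localized pair at the defect. Because Proposition \ref{PropInequality2} controls precisely how perimeter responds to such mass changes, these adjustments can be made perimeter-monotone, so a contradiction obtained for the adjusted masses propagates back to the original masses. After the adjustment the comparison collapses to a single two-interval transposition of equal total mass, where Proposition \ref{PropInequality2} (in the spirit of Proposition \ref{PropInequality1} and the transposition lemma) yields a strict decrease in perimeter, contradicting the assumed isoperimetry of $C$.

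Finally I would record uniqueness: the swap inequalities are strict whenever the relevant masses are distinct, so the alternating partition is the unique minimizer, while if masses coincide the competing labelings describe the \emph{same} geometric configuration and no genuine non-uniqueness arises; reflection across the origin accounts for the remaining symmetry, giving ``unique up to reflective symmetry.'' I expect the main difficulty to lie entirely in the mass-adjustment-and-swap bookkeeping of the inductive step — choosing which masses to inflate or deflate, and verifying that the resulting configurations remain condensed with no endpoint forced across the zero-density origin — rather than in any single inequality, each of which is supplied by the results of Sections 2 and 3.
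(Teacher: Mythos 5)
Your proposal follows essentially the same route as the paper: induction on $n$, reduction via Corollary \ref{CorOneIntervalPerRegion} to a choice of side for each single-interval region, passage to a bifurcating set of masses, inflation of the outer masses into equal pairs across the origin so that shifted endpoints can be compared via Proposition \ref{PropInequality2}, and a final transposition at the outermost defect that strictly lowers perimeter. The only substantive imprecision is your claim that after adjustment the standard and nonstandard configurations ``agree on every interval except the localized pair''; the paper does not (and need not) achieve this, instead pairing equal outer masses so that the swap's endpoint shifts cancel in pairs, leaving one unmatched endpoint whose sign is controlled by the left-shift comparison $|d|<|c|$.
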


\section{Manipulating volumes of regions}

In this section, we explore how changing the weighted volumes of the regions will affect the isoperimetric configuration. These arguments will be used later in our induction/contradiction argument by changing our (non-standard) isoperimetric configuration to a related (non-standard) isoperimetric configuration with different sized regions.


\subsection{Adding a region of small size $\epsilon$}

We want to imagine what happens if we take a set of $n$ regions in standard position and ``add in'' an $(n+1)$th region. A priori, we want this new region to be able to have any mass (so we are not simply creating a new region of largest mass or of smallest mass). We could imagine doing this by injecting the new region into the configuration (possibly as multiple intervals), and shifting/rearranging the rest of the regions to accommodate this new addition. A special case of adding a new region is identified here: 

\begin{definition}
Suppose we have $n$ regions arranged in standard position. Then we say a newly-introduced $(n+1)$th region will\textbf{maintain standard position} if the new configuration with $n+1$ regions is also in standard position. 
\end{definition}

Of course, by simply inserting a new region (even as a single interval) we might lose the ``positive/negative parity'' of the (reindexed) regions. To maintain this parity, we either need our inserted region to be smaller than all other regions (and inserted directly adjacent to the origin); larger than all other regions (and inserted on the outside of the configuration); or, when inserting it into the middle of the configuration, all regions with more mass than the inserted region must be rearranged (by swapping sides across the origin). In what follows below, we see that introducing a mass into a standard configuration of $n$ regions is best done by maintaining standard position, if the introduced mass is small.

\begin{proposition}
Suppose that we have $n$ regions (of mass $M_1 \leq \dots \leq M_n$) arranged in a standard position. Then there exists an $\epsilon$, depending on $n$, such that the following is true: when adding an $(n+1)$th region of size $\leq \epsilon$ to the configuration, perimeter is minimized by adding the new region a way that maintains standard position.
\end{proposition}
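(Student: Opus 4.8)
The plan is to introduce a new region of small mass $\delta \leq \epsilon$ and compare, via the first-variation machinery already developed, the perimeter cost of the standard-position insertion against the cost of any nonstandard insertion. The key structural fact from Corollary \ref{CorOneIntervalPerRegion} is that any isoperimetric configuration of $n+1$ regions is exactly $n+1$ adjacent intervals with the origin as a shared endpoint of two regions. So it suffices to analyze, up to reflection, the finitely many ways the small new interval of mass $\delta$ can be slotted among the existing ordered intervals, and to show the standard-position slot wins for $\delta$ small. First I would make precise the perimeter cost of inserting a tiny interval of mass $\delta$ at a given location: if the new interval is inserted adjacent to the origin, it has an endpoint at $0$ (where $f=0$) and one nearby endpoint at some small $c(\delta)$ with $\int_0^{c} f = \delta$, contributing perimeter $f(c(\delta)) \to 0$ as $\delta \to 0$, while also nudging every outer endpoint outward by the amount needed to conserve mass. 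If instead the interval is inserted in the interior (away from the origin), it necessarily creates two new perimeter endpoints at density $\geq f(a) > 0$ for some fixed $a$ bounded away from $0$, which is a perimeter cost bounded below independently of $\delta$.

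The main computation is therefore a leading-order (in $\delta$) expansion of total perimeter for each candidate insertion. For the standard-position insertion the new region is smallest, sits adjacent to the origin, and forces all other intervals to shift outward by an $O(\delta)$ amount; using $\frac{dP}{dx}=f'(x)$ and the chain rule, the net perimeter change from these outward shifts is $O(\delta)$, while the new interval itself contributes $f(c(\delta)) = o(1)$ but in fact $O(\delta)$-comparable once we account that $c(\delta)\to 0$. The honest statement I want is that the standard insertion costs $P_{\mathrm{std}}(\delta) = P_0 + O(\delta)$ with a controlled (small) coefficient, whereas any insertion that places the new interval in the interior, or that breaks parity without rearranging, costs $P_0 + \kappa + O(\delta)$ for some fixed $\kappa > 0$ coming from the two interior endpoints at positive density. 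Since there are only finitely many combinatorial insertion types for fixed $n$, I take $\epsilon$ small enough that the $O(\delta)$ terms of the standard insertion stay below the minimal fixed gap $\kappa$ over all nonstandard types; this is exactly the $n$-dependence of $\epsilon$ asserted in the statement.

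The delicate point — and the main obstacle — is handling insertions that keep the new interval near the origin but violate standard parity without fully rearranging, i.e. configurations that are ``nearly standard'' and whose perimeter also tends to $P_0$ as $\delta \to 0$. For these the comparison is not against a fixed positive gap $\kappa$ but against the standard insertion at the same order in $\delta$, so I must compare the two $O(\delta)$ coefficients. Here is where Proposition \ref{PropInequality2} and the transposition lemma do the real work: conserving the fixed masses $M_1,\dots,M_n$ while accommodating the new $\delta$ forces different sets of outer endpoints to move, and Proposition \ref{PropInequality2} lets me compare the increments $f(b_i)-f(a_i)$ for the competing placements, showing the standard arrangement (smallest interval innermost, parity split across the origin) produces the smaller first-order perimeter increase. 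I would also record, as in the first-variation remark, that no moving endpoint crosses the origin, so the formulas remain valid throughout. The only genuinely careful estimate is the asymptotics of $c(\delta)$ and hence $f(c(\delta))$ near the zero-density point; since $f(0)=0$ and $f$ is increasing, I would use $\int_0^{c} f = \delta$ to extract $f(c(\delta)) \to 0$, which is what guarantees the innermost-adjacent-to-origin slot is asymptotically free and thus favored.
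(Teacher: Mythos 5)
Your proposal follows essentially the same route as the paper: the zero density at the origin makes the origin-adjacent slot asymptotically free while any interior insertion incurs a perimeter cost bounded below independently of the new mass, Proposition \ref{PropInequality2} controls the induced shifts of the outer endpoints, and the finitely many insertion types let you choose $\epsilon$ depending only on $n$. The only differences are cosmetic --- you spell out the comparison between the two origin-adjacent slots, which the paper dismisses as ``a quick check,'' and your side claim that $f(c(\delta))$ is $O(\delta)$ is neither needed nor true in general (for $f=|x|$ it is $O(\sqrt{\delta})$), but the $o(1)$ bound you also state is all the argument requires.
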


\begin{proof}
We observe that our original $n$ regions have a total of $n$ endpoints other than the origin, and each region can be identified by its endpoint furthest from the origin. Call these endpoints $b_i$. Then the total perimeter is $\sum_{i} f(b_i)$. When adding an $(n+1)$th region, we see that adding it in a manner that places it adjacent to the origin will result in each other interval ``shifting'' a little to the right or left, slightly increasing the value of each $b_i$ on that side of the origin. We can observe that a region of mass $\epsilon$ inserted next to the origin (say, on the positive side) will have an endpoint $b_*$ determined $\int_0^{b_*} f = \epsilon$, and a new perimeter value of $f(b_*)$. Furthermore, all endpoints on this side of the origin will shift. Due to Proposition \ref{PropInequality2}, each endpoint's shift will result in a perimeter increase less than $f(b_*)$. So as a (very rough) estimate, inserting our new mass adjacent to the origin will increase total perimeter less than $(n+1)f(b_*)$. Since our density has a point of 0 density at the origin, by making $\epsilon$ small we will make this quantity less than any $f(b_i)$, which is an underestimate for the perimeter added if our new region is inserted adjacent to another $M_i$.

Therefore, our small $(n+1)$th region must be inserted adjacent to the origin to minimize added perimeter. A quick check allows us to verify that it will be better inserted into standard position (which will be across the origin from $M_1$).
\end{proof}

The previous result is not surprising, as it can be viewed through a lens of continuity. Indeed, we largely expect isoperimetric configurations to vary continuously as the masses of the regiouns vary in a continuous manner. In this way, we can imagine the standard configuration of $n+1$ regions varying continuously as the smallest region (noted above as having volume $\epsilon$) shrinks to 0. 

One concern of ours is that there might exist a set of set of $n+1$ masses for which there are two distinct isoperimetric configurations. One could imagine that varying one (or more) of the masses in such a set could result in diverging isoperimetric configurations. 

\begin{definition}
A set of masses $\{M_i \}$ for which two configurations of regions are simultaneously isoperimetric will be called a \textbf{bifurcating set of masses}
\label{DefNonuniqueIsoperimetrc}
\end{definition}

Our result above shows that, when $n$ masses are fixed and one mass is small, our set of masses will not be bifurcating.

\begin{corollary}
Suppose that we know that the standard configuration for $n$ masses is the isoperimetric configuration. Then given $n+1$ masses, where one of the masses is suitably small, we can be guaranteed that the isoperimetric configuration will be the standard position.
\end{corollary}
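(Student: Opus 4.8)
The plan is to argue by contradiction, leveraging the preceding proposition (which compares insertions into a \emph{fixed} standard $n$-configuration) together with a uniform ``gap'' estimate to upgrade that local comparison into a genuinely global one. Write the $n+1$ masses as $m \le M_1 \le \dots \le M_n$, where $m$ is the small mass, and let $C^\ast$ denote an isoperimetric $(n+1)$-bubble for these masses. By Corollary \ref{CorOneIntervalPerRegion}, $C^\ast$ consists of exactly $n+1$ single-interval regions arranged in condensed form with the origin as a shared endpoint; since $m$ is smaller than every $M_i$, the region of mass $m$ is the innermost interval on its side of the origin. Thus $C^\ast$ is completely specified by the left/right split of the $n$ large masses together with the side chosen for the small region, and there are only finitely many such candidates.

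First I would record the exact form of the total perimeter. As observed in the preceding proposition, for a one-interval-per-region condensed configuration the total perimeter equals $\sum_i f(b_i)$, the sum of $f$ over the far endpoints (the origin contributes $f(0)=0$). For each left/right split $\sigma$ of the $n$ large masses, let $C_\sigma$ be the condensed configuration of those masses alone (no small region), with perimeter $P(C_\sigma)$. Removing the small region from $C^\ast$ and sliding its side inward to the origin produces exactly such a $C_\sigma$, for the split $\sigma^\ast$ induced by $C^\ast$. Using Proposition \ref{PropInequality2} (applied successively to each large region as its inner endpoint is pushed out by the inserted small mass) I would show that inserting the mass $m$ at the origin raises each affected far endpoint's contribution by at most $f(b_\ast)$, where $\int_0^{b_\ast} f = m$; consequently
\[
f(b_\ast) + P(C_\sigma) \;\le\; P_{\text{shifted}}(\sigma) \;\le\; f(b_\ast) + P(C_\sigma) + n\,f(b_\ast),
\]
where $P_{\text{shifted}}(\sigma)$ is the perimeter of the $(n+1)$-configuration obtained by inserting the small region into $C_\sigma$. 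In particular $P(C^\ast) = P_{\text{shifted}}(\sigma^\ast) \ge f(b_\ast) + P(C_{\sigma^\ast})$.

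The decisive step is a uniform gap. Invoking the (inductive) uniqueness of the standard $n$-configuration, every non-standard split $\sigma$ satisfies $P(C_\sigma) > P(C_{\text{std}})$, and since there are finitely many splits the quantity $\delta := \min_{\sigma \ne \text{std}} \big( P(C_\sigma) - P(C_{\text{std}}) \big)$ is a strictly positive constant depending only on $M_1,\dots,M_n$. Suppose, for contradiction, that $\sigma^\ast$ is not the standard split. Then
\[
P(C^\ast) \;\ge\; f(b_\ast) + P(C_{\sigma^\ast}) \;\ge\; f(b_\ast) + P(C_{\text{std}}) + \delta,
\]
whereas the standard $(n+1)$-configuration satisfies $P(\text{std}_{n+1}) \le f(b_\ast) + P(C_{\text{std}}) + n\,f(b_\ast)$. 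Because $f(0)=0$ and $f$ is continuous, $f(b_\ast) \to 0$ as $m \to 0$; choosing $\epsilon$ small enough that $n\,f(b_\ast) < \delta$ forces $P(\text{std}_{n+1}) < P(C^\ast)$, contradicting minimality of $C^\ast$. Hence $\sigma^\ast$ is the standard split, so the large masses sit in standard position; the preceding proposition then identifies the optimal side for the small region, giving $C^\ast = \text{std}_{n+1}$ and, in the language of Definition \ref{DefNonuniqueIsoperimetrc}, showing the masses are not bifurcating.

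The main obstacle, and the reason the preceding proposition alone does not suffice, is precisely this passage from a comparison within one fixed standard configuration to a comparison across all splits: I must ensure that the fixed positive penalty $\delta$ for a non-standard arrangement of the large masses dominates the vanishing cost $n\,f(b_\ast)$ of the outward shift caused by inserting the small region. Making this quantitative — and confirming that the per-endpoint shift is genuinely bounded by $f(b_\ast)$ through Proposition \ref{PropInequality2}, rather than merely being ``small'' — is the crux; everything else is bookkeeping with the condensed structure guaranteed by Corollary \ref{CorOneIntervalPerRegion}.
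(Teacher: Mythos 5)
Your proof is correct, and it is in fact more complete than what the paper offers: the paper states this corollary with no proof at all, presenting it as an immediate consequence of the preceding proposition (and of the informal ``lens of continuity'' remark that follows). The gap you identify is real — the preceding proposition only compares the different ways of inserting a small mass into the \emph{fixed} standard configuration of the $n$ large masses, and by itself does not rule out an isoperimetric $(n+1)$-bubble in which the large masses occupy a non-standard left/right split. Your argument closes exactly that gap: Corollary \ref{CorOneIntervalPerRegion} reduces the competitors to finitely many splits; the inductive uniqueness of the standard $n$-configuration gives a strictly positive minimum excess $\delta$ for every non-standard split of the large masses, a quantity depending only on $M_1,\dots,M_n$ and not on $m$; and the estimate via Proposition \ref{PropInequality2} (the same one the paper uses inside the proof of the preceding proposition) bounds the total perturbation caused by inserting the small mass by $(n+1)f(b_\ast)$, which vanishes as $m \to 0$ because $f(0)=0$. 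Comparing $\delta$ against $n\,f(b_\ast)$ then forces the standard split, and the preceding proposition settles which side the small region goes on. Two small points to tidy: if some of the $M_i$ coincide, distinct splits can realize the standard perimeter, so $\delta$ should be taken over splits whose condensed configuration is not \emph{a} standard configuration (this does not affect positivity); and the threshold $\epsilon$ you produce depends on the large masses $M_1,\dots,M_n$ as well as on $n$, which is consistent with how the corollary is used later but slightly stronger than the ``depending on $n$'' phrasing of the preceding proposition.
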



\begin{corollary}
Suppose it is known that any configuration of $n$ regions has a standard isoperimetric configuration. However, suppose there is a some set of $n+1$ masses whose isoperimetric configuration is non-standard. Then, by shrinking the mass of the smallest region, we can find a bifurcating set of $n+1$ masses. Of the isoperimetric configurations, one will be in standard position. 
\end{corollary}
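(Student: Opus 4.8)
The plan is to run an intermediate-value argument on the smallest mass, exploiting the fact that the isoperimetric configuration is standard when the smallest mass is tiny (the preceding corollary) but non-standard at the given masses (by hypothesis). First I would set up the combinatorial bookkeeping. By Corollary \ref{CorOneIntervalPerRegion}, every isoperimetric $(n+1)$-bubble is condensed with exactly one interval per region, so each candidate configuration is determined by a \emph{side-assignment} sending each region to the positive or negative half-line; condensedness then fixes the order on each side (increasing mass outward). There are only finitely many such assignments (up to the reflection $x \mapsto -x$), exactly one of which is the standard alternating-parity assignment.

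Fixing the $n$ larger masses $M_2 \leq \dots \leq M_{n+1}$ and letting the smallest mass $m$ vary over $(0, M_1]$, I would define $P_{\mathrm{std}}(m)$ to be the perimeter of the standard configuration and $P_{\mathrm{ns}}(m)$ to be the minimum perimeter over all \emph{non-standard} condensed side-assignments. Because shrinking $m$ keeps it the smallest of the $n+1$ masses, no region overtakes it in size, so the standard combinatorial type and the finite list of non-standard types are fixed as $m$ varies. For each fixed assignment every interval endpoint is determined implicitly by its mass constraint $\int f = M_i$ (and $f > 0$ away from the origin), hence depends continuously on $m$; thus $P_{\mathrm{std}}$ is continuous, and $P_{\mathrm{ns}}$, being a minimum of finitely many continuous functions, is continuous as well.

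Next I would pin down the two endpoints. At $m = M_1$ the hypothesis says the unique isoperimetric configuration is non-standard, so $P_{\mathrm{ns}}(M_1) \leq P_{\mathrm{std}}(M_1)$; if equality held, then $\{M_1, \dots, M_{n+1}\}$ would already be bifurcating with a standard minimizer and there would be nothing left to prove, so I may assume $P_{\mathrm{ns}}(M_1) < P_{\mathrm{std}}(M_1)$. For $m = \epsilon$ sufficiently small, the preceding corollary guarantees that the standard $(n+1)$-configuration is the unique isoperimetric one, so $P_{\mathrm{std}}(\epsilon) < P_{\mathrm{ns}}(\epsilon)$. Hence the continuous function $g(m) = P_{\mathrm{std}}(m) - P_{\mathrm{ns}}(m)$ is negative at $\epsilon$ and positive at $M_1$, and the intermediate value theorem yields some $m^{\ast} \in (\epsilon, M_1)$ with $P_{\mathrm{std}}(m^{\ast}) = P_{\mathrm{ns}}(m^{\ast})$. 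At this $m^{\ast}$ the global minimal perimeter $\min\{P_{\mathrm{std}}(m^{\ast}), P_{\mathrm{ns}}(m^{\ast})\}$ is attained simultaneously by the standard configuration and by a non-standard one, so $\{m^{\ast}, M_2, \dots, M_{n+1}\}$ is the desired bifurcating set, and one of its isoperimetric configurations is in standard position.

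The main obstacle I anticipate is justifying the continuity and finiteness claims cleanly rather than the intermediate value step, which is routine once they are in place. Specifically, one must invoke Corollary \ref{CorOneIntervalPerRegion} to collapse the configuration space down to finitely many condensed side-assignments, verify that this finite index set (and the identity of the standard type) genuinely stays constant along the shrinking, and confirm that the per-type perimeter is continuous in $m$. A minor point worth checking is the behavior at ties among the $M_i$, but since interchanging equal-mass regions occupying the same slots leaves the perimeter unchanged, such ties do not disturb the continuity argument.
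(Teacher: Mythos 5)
Your proposal is correct and follows essentially the same route as the paper: both arguments vary the smallest mass continuously between the regime where the standard configuration is uniquely isoperimetric (small mass, by the preceding corollary) and the regime where a nonstandard one wins (the given masses), and locate a transition value at which both configurations are simultaneously isoperimetric. The paper phrases this as a supremum over masses ``below the bifurcation threshold'' and appeals to continuity without elaboration, whereas you make the continuity precise by reducing, via Corollary \ref{CorOneIntervalPerRegion}, to finitely many condensed side-assignments and applying the intermediate value theorem to $P_{\mathrm{std}}-P_{\mathrm{ns}}$ --- a welcome sharpening of the same idea.
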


\begin{proof}
Assume there exists a set of $n+1$ regions (call the regions $R_i$, each of mass $M_i$, with $M_1 \leq \dots \leq M_{n+1}$) whose isoperimetric configuration is non-standard. We know that $M_1$ cannot be too small (or else the configuration would be standard). We will say a mass $x > 0$ is \textbf{below the bifurcation threshold} if the isoperimetric configuration for masses $y, M_2, \dots, M_{n+1}$ is standard for all $y \leq x$. Let $\epsilon$ be defined as $\epsilon = \sup \{ x: x\; \text{is below the threshold} \}$. Then by continuity, we know that the $n+1$ masses $\{ \epsilon, M_2, \dots, M_{n+1} \}$ are bifurcating: there are at least two isoperimetric configurations, one of which is standard.
\end{proof}



\subsection{Inflating a large mass in a bifurcating set of masses}

Here we explore in detail an argument that will be generalized later. Suppose we have a collection of masses $M_1 \leq M_2 \leq \dots \leq M_{n+1}$ that can be configured in two distinct isoperimetrc regions, one standard and one non-standard. Our work in sections 2 and 3 guarantee that in both isoperimetric configurations there are exactly $n+1$ intervals, whose masses increase as they move away from the origin. This means that the region of largest mass must be ``external'' to the configuration in both the standard and non-standard solutions. WLOG, we assume the region of mass $M_{n+1}$ appears on the positive side of the $x$-axis in both images. Let the $n+1$ endpoints (other than the origin) in the standard configuration be denoted as $a_{i}$, and let the endpoints in the non-standard configuration be denoted as $b_i$. Since both configurations are isoperimetric, we have
$$
\sum_{i=1}^{n+1} f(a_i) = \sum_{i=1}^{n+1} f(b_i).
$$
Let us assume for a moment that any configuration of $n$ regions is isoperimetric if and only if it is in standard position. Then we know
$$
\sum_{i=1}^{n} f(a_i) < \sum_{i=1}^{n} f(b_i).
$$
This tells us that $a_{n+1} > b_{n+1}$. An immediate consequence of this follows:

\begin{corollary}
Suppose we it is known that the sole isoperimetric solution with $n$ regions is the standard one, and suppose we have a bifurcating set of masses $\{M_i\}_{i=1}^{n+1}$. WLOG, assume the largest region appears on the positive side in both configurations, and name the endpoints $a_i$ and $b_i$ for the standard and nonstandard solutions respectively. Call the leftmost endpoint of the leftmost region $a_*$ (for the standard configuration) and $b_*$ (for the non-standard). Then $a_* > b_*$.
\end{corollary}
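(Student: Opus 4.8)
The plan is to take the inequality $a_{n+1} > b_{n+1}$ for the outermost positive endpoints (just established under the hypothesis that standard position is the unique isoperimetric $n$-bubble) and transport it to the negative side via conservation of total mass. The structural fact that makes this work is Corollary \ref{CorOneIntervalPerRegion}: each isoperimetric configuration here consists of exactly $n+1$ adjacent intervals meeting at the origin, so each configuration fills a single contiguous interval with no gaps. Thus the standard configuration occupies $[a_*, a_{n+1}]$, with positive part exactly $[0, a_{n+1}]$ and negative part exactly $[a_*, 0]$, and likewise the non-standard configuration occupies $[b_*, b_{n+1}] = [b_*, 0] \cup [0, b_{n+1}]$. (Since $R_{n+1}$ has maximal mass and condensed regions grow outward, its outer endpoint is genuinely the rightmost point of each configuration, so these identifications are valid.)

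First I would record that both configurations enclose the same total mass $S = \sum_{i=1}^{n+1} M_i$, and that this mass is just the integral of $f$ over the occupied interval. Splitting each integral at the origin gives
$$
\int_{a_*}^{0} f + \int_0^{a_{n+1}} f \;=\; S \;=\; \int_{b_*}^{0} f + \int_0^{b_{n+1}} f.
$$
Next, because $a_{n+1} > b_{n+1}$ and $f > 0$ away from the origin, the positive-side masses obey $\int_0^{a_{n+1}} f > \int_0^{b_{n+1}} f$ strictly. Substituting this into the displayed equality forces the negative-side masses to satisfy the reverse strict inequality $\int_{a_*}^{0} f < \int_{b_*}^{0} f$.

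Finally I would convert this mass comparison back into an endpoint comparison: since $t \mapsto \int_t^0 f$ is strictly decreasing for $t<0$ (again using $f>0$ for $x\neq 0$), the inequality $\int_{a_*}^{0} f < \int_{b_*}^{0} f$ yields $a_* > b_*$, which is the claim. I expect the only point requiring genuine care to be the structural justification that each configuration's two sides are contiguous intervals anchored at the origin, so that total mass decomposes cleanly as the sum of the positive-side and negative-side integrals; this is exactly where the condensed form and Corollary \ref{CorOneIntervalPerRegion} are needed, and once it is in place the inequality manipulations are routine, with strictness coming throughout from $f$ being positive off the origin.
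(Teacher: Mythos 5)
Your proposal is correct and follows essentially the same route as the paper: the paper likewise deduces $a_* > b_*$ from $a_{n+1} > b_{n+1}$ together with the fact that both configurations enclose the same total mass $\sum M_i$ between their leftmost and rightmost endpoints. You have merely made explicit the splitting of the mass integral at the origin and the contiguity of the configurations, which the paper leaves implicit.
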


\begin{proof}
From our work above, we know that $a_{n+1} > b_{n+1}$. These are the rightmost endpoints of each of our isoperimetric solutions. Since both configurations have the same total mass between the leftmost and rightmost endpoints (namely, $\sum M_i$), we must have $a_* > b_*$ as well.
\end{proof}

\begin{definition}
Suppose we have two isoperimetric solutions (one standard and one non-standard) with masses $M_1 \leq \dots \leq M_{n+1}$, with $a_i$ and $b_i$ defined as above (for the standard and non-standard solution, respectively). If $b_* < a_* \leq 0 < b_{n+1} < a_{n+1}$, we say the nonstandard solution is \textbf{shifted to the left} of the standard solution.
\end{definition}


Our main result of this section is as follows:

\begin{proposition}
Suppose we have a bifurcating set of masses with two isoperimetric solutions (one standard and one nonstandard). WLOG, assume the largest region is on the positive side of the origin. Suppose the second-largest region, $M_\star$, is on the negative side of the origin. Then, by increasing the value of $M_*$ (and thus inflating the left-most region of each configuration), we can create a new set of masses in which the standard configuration fails to be isoperimetric.
\label{PropOuterMassGrowth}
\end{proposition}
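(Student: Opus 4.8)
The plan is to inflate $M_\star$ in each configuration by the most economical move available -- pushing only the single outermost (leftmost) endpoint farther from the origin -- and then to compare, via Proposition \ref{PropInequality2}, how much perimeter each configuration is forced to gain. Because $M_\star$ is the second-largest region and the largest region sits on the positive side, $M_\star$ is the largest region on the negative side, hence the leftmost region in \emph{both} pictures (Corollary \ref{CorOneIntervalPerRegion}). Its left endpoint ($a_*$ in the standard solution, $b_*$ in the nonstandard one) is therefore a free, outermost endpoint whose right neighbor is interior to the configuration. Dragging this endpoint leftward by exactly the amount needed to add a fixed mass $\Delta M > 0$ leaves every other region's mass untouched and adds precisely $\Delta M$ to $M_\star$; in each case the result is a legitimate configuration realizing the new mass list (taking $\Delta M$ small enough that the size ordering, and hence the identity of the standard configuration for the new masses, is preserved). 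Recall from the preceding corollaries that the nonstandard solution is shifted to the left, so $b_* < a_* \le 0$, whence $|a_*| < |b_*|$.

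The heart of the argument is the perimeter comparison. Writing $a_*'$ and $b_*'$ for the new leftmost endpoints, the added perimeter is $f(a_*') - f(a_*)$ in the standard configuration and $f(b_*') - f(b_*)$ in the nonstandard one, all other perimeter points being unchanged. Reflecting across the origin (using $f(-x)=f(x)$), these equal $f(|a_*'|) - f(|a_*|)$ and $f(|b_*'|) - f(|b_*|)$, the perimeter differences of the two positive-axis intervals $[|a_*|,|a_*'|]$ and $[|b_*|,|b_*'|]$, each of mass $\Delta M$. Their innermost endpoints satisfy $|a_*| < |b_*|$, so Proposition \ref{PropInequality2} applies directly and yields
$$
f(|a_*'|) - f(|a_*|) \;\geq\; f(|b_*'|) - f(|b_*|),
$$
with strict inequality since $[\log f]'' < 0$ is strict (the quantity $Q$ in the proof of Proposition \ref{PropInequality2} is strictly decreasing on a nondegenerate interval). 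Thus inflating $M_\star$ imposes strictly more added perimeter on the standard configuration than on the nonstandard one.

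Finally, since the set of masses was bifurcating, the two configurations began with equal (isoperimetric) total perimeter; after the inflation the standard configuration therefore has strictly larger total perimeter than the still-valid modified nonstandard configuration. Hence the standard configuration is beaten by a competitor and fails to be isoperimetric for the new masses, as claimed. I expect the main obstacle to be the first step rather than the inequality itself: one must verify that inflating $M_\star$ can genuinely be carried out by moving only the outermost endpoint -- which is exactly what being the leftmost region of a condensed, one-interval-per-region solution guarantees -- and that the two equal-mass ``added slivers'' are correctly matched to the hypotheses of Proposition \ref{PropInequality2} after reflection. Once that bookkeeping is in place, the conclusion is an immediate consequence of that inequality together with the shifted-left inequality $|a_*| < |b_*|$.
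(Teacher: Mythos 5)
Your proposal is correct and follows essentially the same route as the paper: inflate $M_\star$ by pushing the leftmost endpoint outward in each configuration, then use the left-shifted inequality $|a_*| < |b_*|$ together with Proposition \ref{PropInequality2} to show the standard configuration absorbs more added perimeter. Your version is in fact slightly more careful than the paper's, since you make explicit the reflection needed to apply Proposition \ref{PropInequality2} (which is stated for the positive axis) and justify the strictness of the inequality via strict log-concavity, a point the paper asserts without comment.
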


\begin{proof}
Referring to Figure \ref{Figure7MassGrow}, suppose we have two isoperimetric configurations, one standard (with endpoints $a_i$) and one non-standard (with endpoints $b_i$). Enlarging the second-largest mass, $M_*$ will shift $a_*$ and $b_*$ to new values, $a'_*$ and $b'_*$. However, we know the non-standard configuration is shifted to the left, and due to Proposition \ref{PropInequality2} we can conclude $f(a'_*) - f(a_*) > f(b'_*) - f(b_*)$. Calculating the perimeter of our new configuration, we see that the $\sum f(a_i) > \sum f(b_i)$, and that the nonstandard configuration now has less perimeter than the standard. Therefore, the standard configuration with this new set of masses cannot be isoperimetric. \end{proof}

%
%
\begin{figure}[H]
    \centering
    \begin{subfigure}[h]{0.4\textwidth}
    \begin{tikzpicture}[scale=0.7]
    \draw[gray, stealth-,very thick] (-3,3) -- (0,0);
    \draw[gray, -stealth,very thick](0,0) -- (3,3);
    \draw[gray, stealth-stealth,very thick] (-4,0) -- (4,0); 
    
    
    \filldraw[blue,ultra thick] (-1.5,0) -- (2.0,0);
    \filldraw[green,ultra thick] (-2.0,0) -- (-1.5,0);

    \draw[red, -stealth,very thick] (-2,0) -- (-2.5,0);

   	\filldraw (-2.0,0) circle (2pt) node[anchor=north] {$a_*$};
    \filldraw (2.0,0) circle (2pt);
	\filldraw (-1.5,0) circle (2pt);

    \end{tikzpicture}
    	\end{subfigure}
 	\hfill
    \begin{subfigure}[h]{0.40\textwidth}    
        
  \begin{tikzpicture}[scale=0.7]
    \draw[gray, stealth-,very thick] (-3,3) -- (0,0);
    \draw[gray, -stealth,very thick](0,0) -- (3,3);
    \draw[gray, stealth-stealth,very thick] (-4,0) -- (4,0); 
    
    
    \filldraw[blue,ultra thick] (-1.7,0) -- (1.8,0);
    \filldraw[green,ultra thick] (-2.2,0) -- (-1.7,0);

    \draw[red, -stealth,very thick] (-2.2,0) -- (-2.65,0);

   	\filldraw (-2.2,0) circle (2pt) node[anchor=north] {$b_*$};
    \filldraw (1.8,0) circle (2pt);
	\filldraw (-1.7,0) circle (2pt); 

    \end{tikzpicture}    
    
    	\end{subfigure}
    
    \caption{Our two images represent two different isoperimetric configurations (whose details, except for the leftmost region $M_*$, are hidden in the blue region of the $x$-axis). We note that the second picture is left-shifted, so $b_* < a_*$. Inflating the mass $M_*$ will shift these points to the left, but (due to the left-shifted nature) will increase the perimeter for the standard configuration more than the nonstandard.}
    \label{Figure7MassGrow}
\end{figure}
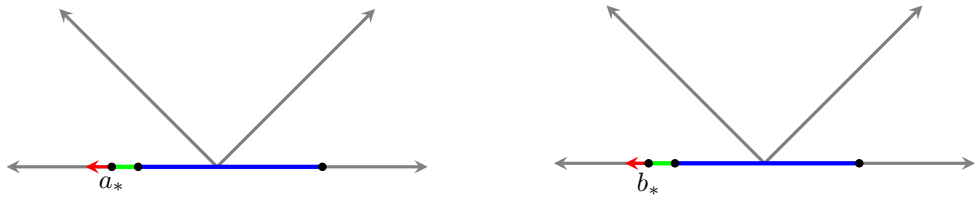

\section{Proof of the $n$-bubble theorem}

In this section, we prove Theorem \ref{TheoremNBubble}'s claim that the only isoperimetric solution is the standard one. We do this by induction. Suppose it is known that every set of $n$ positive masses has the standard configuration as the only isoperimetric solution. For contradiction, assume this is not true for $n+1$. This means there exists at least one configuration of masses $M_1 \leq \dots \leq M_{n+1}$ for which there is a nonstandard isoperimetric solution. Corollary \ref{CorOneIntervalPerRegion} tells us that each region in this nonstandard configuration consists of a single interval, that each interval is entirely on one side of the origin, and that (on its respective side of the interval) the intervals are ordered according to mass size, with the smaller masses closer to the origin.

First, WLOG, we assume (shrinking $M_1$ if necessary) that our set of $n+1$ masses are bifurcating, with both a standard and nonstandard isoperimetric solution. Identify the unique outer endpoints associated with mass $M_i$, and name them $a_i$ (in the case of the standard solution) and $b_i$ (in the case of the nonstandard solution). Assume our solutions are oriented so that the largest region (associatd with $M_{n+1}$) appears on the positive side of the origin in both configurations.

The standard solution will have regions (and endpoints $a_i$) that alternate from the positive to the negative side as $i$ increases. The nonstandard solution, by virtue of being nonstandard, will not -- therefore it will need to break this pattern at some point. There will need to be a largest pair of consecutive indices $j$, $j+1$ for which $b_j$ and $b_{j+1}$ are both on the same side of the origin. Our goal will be to use the intervals associated with $M_j$ and $M_{j+1}$ to construct a contradiction.

An overview of our argument is as follows: We aim to perform a direct comparison, but only after we carefully ``inflate'' the masses larger than $M_{j+1}$ so that they each have a partner of equal mass on the opposite side of the origin. We will inflate $M_n$ to be equal to $M_{n+1}$; inflate $M_{n-2}$ to be equal to $M_{n-1}$; and so on, until we get to the offending index. We will show that doing this carefully will allow us to keep both a standard and nonstandard solution. Making the masses pairwise equal will simplify the direct comparison of endpoints that we hope to do later.

Here, we include more details about ``inflating'' appropriate masses to reach pairs of equal masses. We start with our largest indices. Assume $M_{n}$ is strictly smaller than $M_{n+1}$. Additionally, assume that $b_n$ and $b_{n+1}$ have opposite signs. (If they had the same sign, then we would already have our consecutive indices with $j=n$.) Our previous work shows that our nonstandard configuration is shifted to the left of the standard configuration, so that $b_n < a_n < 0$. In this case, $b_n$ is further away from the origin than $a_n$. This means, as in Proposition \ref{PropOuterMassGrowth}, adding any amount of mass to $M_n$ will shift $b_n$ and $a_n$ further to the left, resulting in new points $b_n'$ and $a_n'$ that satisfy $f(b_n') - f(b_n) < f(a_n') - f(a_n)$. 
We conclude that our nonstandard configuration has less total perimeter than the standard configuration. We have shown

\noindent \textbf{Partial Result}: If there exists a set of $n+1$ masses $M_1 \leq \dots \leq M_n+1$ for which a nonstandard framework is isoperimetric, and that isoperimetric framework has $M_n$ and $M_{n+1}$ on opposite sides of the origin, then there exists a (possibly new) set of $n+1$ masses, also with a nonstandard isoperimetric configuration, in which the two largest masses are of equal size.

\begin{remark}
We can, if necessary, shrink our smallest mass until our set of masses is once again a bifurcating set. 
\end{remark}

Next, we observe that when $M_n = M_{n+1}$, we can WLOG flip our nonstandard configuration across the origin to put the third largest region ($M_{n-1}$) on the side of the positive $x$-axis (this will already be true for the standard configuration). By our assumption that $n$-bubbles are always standard, we have that (newly flipped) nonstandard configuration is still shifted to the left of the standard configuration.

We now perform a similar analysis on the third- and fourth-largest regions, $M_{n-1}$ and $M_{n-2}$. Again, we assume that $b_{n-2}$ is on the negative side of the axis (or else we would have our index $j$). We find that, as before, we can ``inflate'' the size of $M_{n-2}$ until it reaches the same size as $M_{n-1}$. This inflation would move the leftmost endpoints of each configuration further to the left, in a manner that ultimately leads the nonstandard configuration to have less perimeter than the standard configuration. The details are shown in Figures \ref{Figure8IntervalInflation} and \ref{Figure9ShiftingLeft}. Notice that the four intervals $[b_n', b_{n}]$, $[b_{n-2}', b_{n-2}]$, $[a_n', a_{n}]$, and $[a_{n-2}', a_{n-2}]$ all have the same weighted mass (which is exactly the difference between $M_{n-2}$ and $M_{n-1}$). The left-shifted nature of our non-standard configuration, and Proposition \ref{PropInequality2}, implies that $f(b_n') - f(b_{n}) < f(a_n') - f(a_{n})$ and $f(b_{n-2}') - f(b_{n-2}) < f(a_{n-2}') - f(a_{n-2})$. This will immediately imply that, after inflating $M_{n-2}$ and shifting our endpoints appropriately, our new perimeter is lower in the nonstandard example. We have shown

\noindent \textbf{Partial Result}: If there exists a set of $n+1$ masses $M_1 \leq \dots \leq M_n+1$ for which a nonstandard framework is isoperimetric, and that isoperimetric framework has $M_{n-1}$ and $M_{n+1}$ on one side of the origin opposite $M_n$ and $M_{n-2}$, then there exists a (possibly new) set of $n+1$ masses, also with a nonstandard isoperimetric configuration, in which $M_{n+1} = M_n$ and $M_{n-2} = M_{n-1}$.

%
%
\begin{figure}[H]
    \centering
    \begin{subfigure}[h]{0.45\textwidth}
    \begin{tikzpicture}[scale=0.65]
    \draw[gray, stealth-,very thick] (-1,1) -- (0,0);
    \draw[gray, -stealth,very thick](0,0) -- (1,1);
    \draw[gray, stealth-stealth,very thick] (-4,0) -- (4,0); 
    
    
    \filldraw[blue,ultra thick] (-1.5,0) -- (2.0,0);
    \filldraw[green,ultra thick] (-2.0,0) -- (-1.5,0);

    \filldraw[red,ultra thick] (-2.5,0) -- (-2.0,0);
	\filldraw[yellow, ultra thick] (2.0,0) -- (2.4,0);
	\filldraw[red, ultra thick] (2.4,0) -- (2.9,0);	
    

    \filldraw (-2.5,0) circle (2pt);
   	\filldraw (-2.0,0) circle (2pt);
    \draw[<-] (-2.0,-0.3) -- (-2.0, -0.7) node[anchor=north] {\tiny $a_{n-2}$};
    \filldraw (2.0,0) circle (2pt);
	\filldraw (-1.5,0) circle (2pt); 
	\filldraw (2.4, 0) circle (2pt);   
	\filldraw (2.9, 0) circle (2pt);

    \end{tikzpicture}
    	\end{subfigure}
 	\hfill
    \begin{subfigure}[h]{0.45\textwidth}    
        
  \begin{tikzpicture}[scale=0.65]
    \draw[gray, stealth-,very thick] (-1,1) -- (0,0);
    \draw[gray, -stealth,very thick](0,0) -- (1,1);
    \draw[gray, stealth-stealth,very thick] (-4,0) -- (4,0); 
    
    \filldraw[red,ultra thick] (-2.2,0) -- (-2.7,0);
    \filldraw[yellow, ultra thick] (1.8,0) -- (2.3,0);
    \filldraw[red, ultra thick] (2.3,0) -- (2.7,0);
    
    \filldraw[blue,ultra thick] (-1.7,0) -- (1.8,0);
    \filldraw[green,ultra thick] (-2.2,0) -- (-1.7,0);


   	\filldraw (-2.2,0) circle (2pt);
   	\draw[<-] (-2.2,-0.3) -- (-2.2, -0.7) node[anchor=north] {\tiny $b_{n-2}$};
   	\filldraw (-2.7,0) circle (2pt);
    \filldraw (1.8,0) circle (2pt);
	\filldraw (-1.7,0) circle (2pt); 
	\filldraw (2.3, 0) circle (2pt);   
	\filldraw (2.7, 0) circle (2pt);   

    \end{tikzpicture}    
    
    	\end{subfigure}
    
    \caption{Here we see two configurations with the same weighted perimeter. The left picture is our standard configuration, while the right is a nonstandard configuration. In each picture, the outermost (red) intervals contain the same weighted area. In each picture, the yellow region contains mass that is slightly bigger than the green region. To keep the picture clear, all smaller intervals are omitted and shown as blue.}
    \label{Figure8IntervalInflation}
\end{figure}
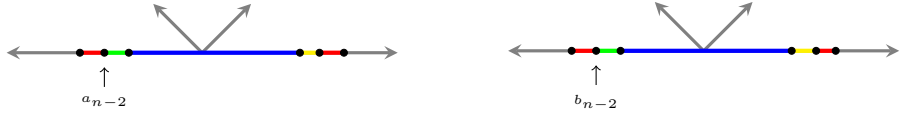

%
%
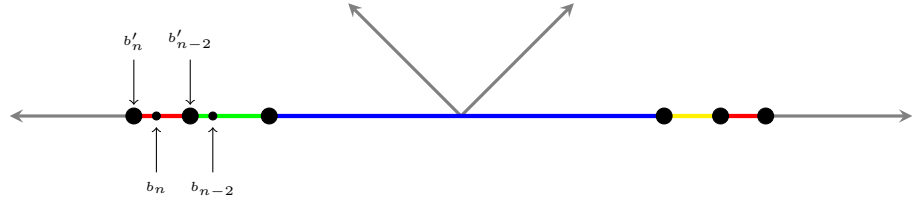
\begin{figure}[H]
    \centering
        
  \begin{tikzpicture}[scale=1.5]
    \draw[gray, stealth-,very thick] (-1,1) -- (0,0);
    \draw[gray, -stealth,very thick](0,0) -- (1,1);
    \draw[gray, stealth-stealth,very thick] (-4,0) -- (4,0); 
    
    \filldraw[red,ultra thick] (-2.4,0) -- (-2.9,0);
    \filldraw[yellow, ultra thick] (1.8,0) -- (2.3,0);
    \filldraw[red, ultra thick] (2.3,0) -- (2.7,0);
    
    \filldraw[blue,ultra thick] (-1.7,0) -- (1.8,0);
    \filldraw[green,ultra thick] (-2.4,0) -- (-1.7,0);


   	\draw[<-] (-2.2,-0.1) -- (-2.2, -0.5) node[anchor=north] {\tiny $b_{n-2}$};
   	\draw[<-] (-2.7,-0.1) -- (-2.7, -0.5) node[anchor=north] {\tiny $b_{n}$};
   	\draw[<-] (-2.9,0.1) -- (-2.9, 0.5) node[anchor=south] {\tiny $b_{n}'$};
   	\draw[<-] (-2.4,0.1) -- (-2.4, 0.5) node[anchor=south] {\tiny $b_{n-2}'$};

   	\filldraw (-2.2,0) circle (1pt);
    \filldraw (-2.9,0) circle (2pt);
    \filldraw (-2.4,0) circle (2pt);
   	\filldraw (-2.7,0) circle (1pt);
    \filldraw (1.8,0) circle (2pt);
	\filldraw (-1.7,0) circle (2pt); 
	\filldraw (2.3, 0) circle (2pt);   
	\filldraw (2.7, 0) circle (2pt);   

    \end{tikzpicture}    
     \caption{Here, we see our nonstandard configuration with $M_{n-2}$ inflated until the green interval has the same weighted mass as the yellow interval. This has resulted in two of the original endpoints ($b_n$ and $b_{n-2}$) shifting further to the left. (The new endpoints are denoted $b_n'$ and $b_{n-2}'$.) A similar shift occurs in the standard configuration, with new endpoints $a_{n}'$ and $a_{n-2}'$.}
     \label{Figure9ShiftingLeft}
\end{figure}

We see that we can continue this process. At each step, we begin with a bifurcation in which the outermost intervals, in pairs, have the same enclosed mass. We then repeat the following actions:
\begin{itemize}
\item Flip our nonstandard configuration (if necessary) to put the region with the next-largest mass, $M_k$, on the positive side.
\item Observe that our nonstandard configuration is still shifted to the left of our standard one.
\item If the interval with $M_{k-1}$ is on the negative side, we can ``inflate'' $M_{k-1}$ until it is the same size as $M_k$. Due to the left shift and Proposition \ref{PropInequality2}, our nonstandard configuration will have total perimeter no more than than our standard configuration, meaning that the standard configuration is not solely isoperimetric.
\item Shrink $M_1$, if necessary, until our set of masses is once again bifurcating (only now with $M_{k-1}$ set equal to $M_k$) as well.
\end{itemize}

We repeat as necessary until our we reach consecutive indices $j$, $j+1$ with endpoints that are on the same (positive) side of the origin in our nonstandard configuration. Note that the outer regions are grouped into pairs of equal weighted volume. At this point, we have a nonstandard configuration that looks like Figure \ref{Figure10NonstandardCloseup}

%
%
\begin{figure}[H]
    \centering
    \begin{tikzpicture}
    \draw[gray, stealth-stealth,very thick] (-4.5,0) -- (4.5,0); 
    \draw[gray, stealth-,very thick] (-2.5,2.5) -- (0,0);
    \draw[gray, -stealth,very thick](0,0) -- (2.5,2.5);


    \filldraw (-4.1,0) circle (2pt) ;  
    \filldraw (-3.4,0) circle (2pt) ;
    \filldraw (-2.6,0) circle (2pt) node[anchor=north] {$c$} ;
    \filldraw (-2,0) circle (2pt) ;
    
	\node[anchor=south] at (-0.5,0) {\tiny $M_2$};
	\node[anchor=south] at (-1.5,0) {. . .};
   	\draw[<-] (-2.3,0.1) -- (-2.3, 0.9) node[anchor=south] {\tiny $M_{\ell}$};
	\node[anchor=south] at (-3,0) {. . . };
    \filldraw (-1,0) circle (2pt) ;

    \filldraw (0.8,0) circle (2pt) ;
	\node[anchor=south] at (0.5,0) {\tiny $M_1$};
	\node[anchor=south] at (1.1,0) {\tiny $M_3$};
    \filldraw(1.4,0) circle(2pt) ;
    \node[anchor=south] at (1.6,0) {...};
    \filldraw(1.8,0) circle(2pt) ;
	\draw[<-] (2.05,0.1) -- (2.05, 0.4) node[anchor=south] {\tiny $M_{j}$};
    \filldraw (2.3,0) circle (2pt) node[anchor=north] {$d$} ;;
   	\draw[<-] (2.5,0.1) -- (2.5, 0.9) node[anchor=south] {\tiny $M_{j+1}$};
	\filldraw (2.7,0) circle (2pt) ;
	\node[anchor=south] at (3,0) {...};
    \filldraw (3.3,0) circle (2pt) ;
	\node[anchor=south] at (3.8,0) {\tiny $M_{n+1}$};    
	\filldraw (4.2,0) circle (2pt) ;
	\node[anchor=south] at (-3.7,0) {\tiny $M_{n}$};    

    \end{tikzpicture}
    \caption{The $n+1$ regions are alternating for large indices, and these large indices are grouped in consecutive pairs with equal volume ($M_{n+1} = M_n$, etc). The largest index that fails this pattern in $j+1$, so that $M_j$ and $M_{j+1}$ are on the same side of the origin. The largest index, smaller than $j$, that appears on the other side of the origin is $\ell$.}
    \label{Figure10NonstandardCloseup}
\end{figure}
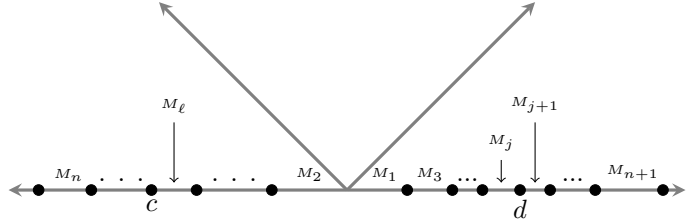       


Let $c$ be the outer endpoint of the interval associated with region $M_\ell$, and let $d$ be the endpoint separating $M_j$ from $M_{j+1}$ (these endpoints have been identified in Figure \ref{Figure10NonstandardCloseup}). In short order, we will show that $|d| < |c|$. We momentarily assume this, and additionally recognize that $M_\ell < M_j$ (it is known that $M_\ell \leq M_j$, and if $M_\ell = M_j$ we can simply change their index names so that $M_j$, $M_{j+1}$ appear on opposite sides of the origin). Then, as seen in Figure \ref{Figure11Transpose}, transposing the regions associated with $M_\ell$ and $M_j$ will result in lower total perimeter. 


\begin{figure}[H]
    \centering
    \begin{tikzpicture}
    \draw[gray, stealth-stealth,very thick] (-4.5,0) -- (4.5,0); 
    \draw[gray, stealth-,very thick] (-2.5,2.5) -- (0,0);
    \draw[gray, -stealth,very thick](0,0) -- (2.5,2.5);


    \filldraw (-4.1,0) circle (2pt) ;  
    \filldraw (-3.4,0) circle (2pt) ;
    \filldraw (-2.6,0) circle (2pt) node[anchor=north] {$c$} ;
    \filldraw (-2,0) circle (2pt) ;
    
	\node[anchor=south] at (-0.5,0) {\tiny $M_2$};
	\node[anchor=south] at (-1.5,0) {. . .};
   	\draw[<-] (-2.3,0.1) -- (-2.3, 0.9) node[anchor=south] {\tiny $M_{\ell}$};
	\node[anchor=south] at (-3,0) {. . . };
    \filldraw (-1,0) circle (2pt) ;

    \filldraw (0.8,0) circle (2pt) ;
	\node[anchor=south] at (0.5,0) {\tiny $M_1$};
	\node[anchor=south] at (1.1,0) {\tiny $M_3$};
    \filldraw(1.4,0) circle(2pt) ;
    \node[anchor=south] at (1.6,0) {...};
    \filldraw(1.8,0) circle(2pt) ;
	\draw[<-] (2.05,0.1) -- (2.05, 0.4) node[anchor=south] {\tiny $M_{j}$};
    \filldraw (2.3,0) circle (2pt) node[anchor=north] {d} ;;
   	\draw[<-] (2.5,0.1) -- (2.5, 0.9) node[anchor=south] {\tiny $M_{j+1}$};
	\filldraw (2.7,0) circle (2pt) ;
	\node[anchor=south] at (3,0) {...};
    \filldraw (3.3,0) circle (2pt) ;
	\node[anchor=south] at (3.8,0) {\tiny $M_{N+1}$};    
	\filldraw (4.2,0) circle (2pt) ;
	\node[anchor=south] at (-3.7,0) {\tiny $M_{N}$};    

\path[draw,decorate,decoration=brace] (-2.0,-0.4) -- (-2.6,-0.4)
node[midway,below,font=\small\sffamily]{};

\path[draw,decorate,decoration=brace] (2.3,-0.4) -- (1.8,-0.4)
node[midway,below,font=\small\sffamily]{};

\draw [<->] (2,-0.7) to [out=-130,in=-50] (-2.2,-0.7);

    \end{tikzpicture}
    \caption{Transposing the locations of the $j$th and $\ell$th regions will result in a decreasing of perimeter. Endpoint $c$ and those to the left of $c$ will increase, but this increase in perimeter will be offset by the decrease in perimeter by $d$ and points to the left of $d$.}
    \label{Figure11Transpose}
\end{figure}
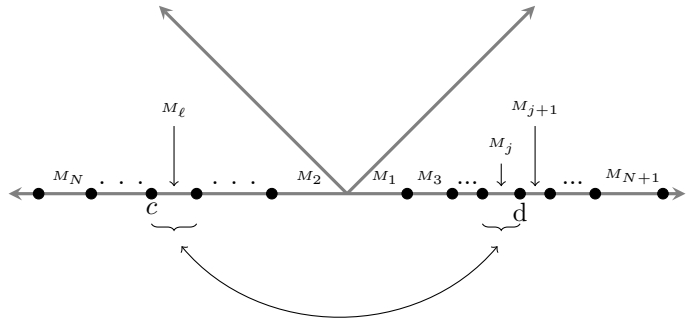

The decreased perimeter can be seen as follows. First, we recognie that swapping the regions $M_j$ and $M_\ell$ will result in a shifting of a certain number of endpoints. The endpoints that are less than or equal to $c$ will shift to the left, resulting in an increased contribution to perimeter. Meanwhile, the endpoints that are greater than or equal to $d$ will also shift to the left, but will result in a decrease in perimeter. (All endpoints are known to shift to the left because $M_\ell < M_j$.) However, by counting regions, we see that there is one more endpoint  moving on the right than on the left. This extra endpoint should be seen as $d$, because all other shifting endpoints can be paired with a partner across the origin. Because $|c| > |d|$, because we inflated masses to be paired and equal on opposite sides of the origin, and using Proposition \ref{PropInequality2}, we see that the decrease in perimeter from each endpoint on the right outweights the increase in perimeter from their partner on the left. Taken together, we see that the total perimeter decreases after the transposition of $M_j$ and $M_\ell$. Thus, the original nonstandard configuration could not have been isoperimetric, giving us out contradiction.

Thus, to complete the argument, it remains to be seen that $|d| < |c|$. This is shown in Figure \ref{Figure12CompareCD} and uses the fact that our nonstandard configuration is left-shifted when compared to our standard configuration.

%
%
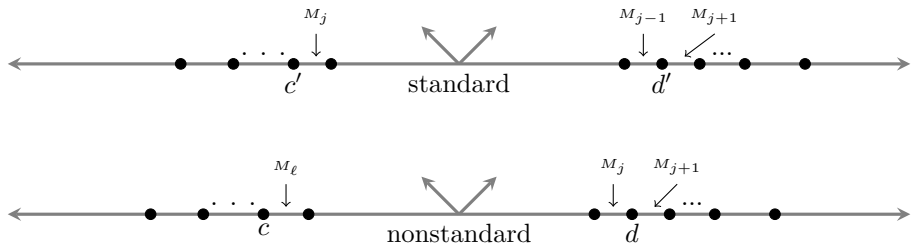
\begin{figure}[H]
    \centering
    \begin{tikzpicture}
    \draw[gray, stealth-stealth,very thick] (-6,0) -- (6,0); 
    \draw[gray, stealth-,very thick] (-0.5,0.5) -- (0,0);
    \draw[gray, -stealth,very thick](0,0) -- (0.5,0.5);

    \filldraw (-4.1,0) circle (2pt) ;  
    \filldraw (-3.4,0) circle (2pt) ;
    \filldraw (-2.6,0) circle (2pt) node[anchor=north] {$c$} ;
    \filldraw (-2,0) circle (2pt) ;
   	\draw[<-] (-2.3,0.1) -- (-2.3, 0.4) node[anchor=south] {\tiny $M_{\ell}$};
	\node[anchor=south] at (-3,0) {. . . };

    \filldraw(1.8,0) circle(2pt) ;
	\draw[<-] (2.05,0.1) -- (2.05, 0.4) node[anchor=south] {\tiny $M_{j}$};
	\draw[<-] (2.6,0.1) -- (2.9, 0.4) node[anchor=south] {\tiny $M_{j+1}$};
    \filldraw (2.3,0) circle (2pt) node[anchor=north] {$d$} ;
	\filldraw (2.8,0) circle (2pt) ;
	\node[anchor=south] at (3.1,0) {...};
    \filldraw (3.4,0) circle (2pt) ;
	\filldraw (4.2,0) circle (2pt) ;   
	
	\node[anchor=north] at (0,0) {nonstandard};

	    \draw[gray, stealth-stealth,very thick] (-6,2) -- (6,2); 
    \draw[gray, stealth-,very thick] (-0.5,2.5) -- (0,2);
    \draw[gray, -stealth,very thick](0,2) -- (0.5,2.5);

    \filldraw (-3.7,2) circle (2pt) ;  
    \filldraw (-3.0,2) circle (2pt) ;
    \filldraw (-2.2,2) circle (2pt) node[anchor=north] {$c'$} ;
    \filldraw (-1.7, 2) circle (2pt) ;
   	\draw[<-] (-1.9,2.1) -- (-1.9, 2.4) node[anchor=south] {\tiny $M_{j}$};
	\node[anchor=south] at (-2.6,2) {. . . };

    \filldraw(2.2,2) circle(2pt) ;
	\draw[<-] (2.45,2.1) -- (2.45, 2.4) node[anchor=south] {\tiny $M_{j-1}$};
	\draw[<-] (3.0,2.1) -- (3.4, 2.4) node[anchor=south] {\tiny $M_{j+1}$};
    \filldraw (2.7,2) circle (2pt) node[anchor=north] {$d'$} ;
	\filldraw (3.2,2) circle (2pt) ;
	\node[anchor=south] at (3.5,2) {...};
    \filldraw (3.8,2) circle (2pt) ;
	\filldraw (4.6,2) circle (2pt) ;   

	\node[anchor=north] at (0,2) {standard};

    \end{tikzpicture}
    \caption{A nonstandard and a standard configuration lined up for comparison. The nonstandard configuration is left-shifted.}
    \label{Figure12CompareCD}
\end{figure}       

Because the nonstandard configuration is left shifted, and because both configurations have the same alternating pattern for the largest regions (i.e. those greater than $j$), we can conclude that $d' > d$ and that $c' > c$. Furthermore, because of the alternating nature of the standard configuration, we know that $|c'| > |d'|$. Taken together, we get 
$
|c| > |c'| > |d'| > |d|
$
 as desired.
 

\bibliographystyle{plain}
\bibliography{2021NBubble}

\end{document}